\newif\ifcomments
\let\newComments\newKibitzer
\newComments\GM{GM}{red}
\newComments\SB{SB}{blue}
\newtheorem{Thm}{Theorem}
\newtheorem{theorem}{Theorem}[section]
\newtheorem{Lem}[theorem]{Lemma}
\newtheorem{corollary}{Corollary}
\newtheorem{rmk}{Remark}[section]
\numberwithin{equation}{section}
\newcommand{\bb}{\mathbb}
\newcommand{\mr}{\mathrm}
\newcommand{\frk}{\mathfrak}
\newcommand{\tr}{\mr{tr}\,}
\newcommand{\mrd}{\mathrm{d}}
\newcommand*{\suchthat}[1]{\left|\vphantom{#1}\right.}
\begin{document}

\title{The classical dynamic symmetry for the $\mathrm{U}(1)$-Kepler problems}
\author{Sofiane Bouarroudj}

\address{Division of Science and Mathematics, New York University Abu Dhabi, Po Box 129188, Abu Dhabi, United Arab Emirates.}
\email{sofiane.bouarroudj@nyu.edu}

\author{Guowu Meng}
\address{Department of Mathematics, Hong Kong Univ. of Sci. and
Tech., Clear Water Bay, Kowloon, Hong Kong.}

\email{mameng@ust.hk}
\thanks{The authors were supported by the Hong Hong Research Grants Council under RGC Project No. 16304014; SB was also supported by the grant NYUAD-065.}


\date{\today}



\begin{abstract}
For the Jordan algebra of hermitian matrices of order $n\ge 2$, we let $X$ be its submanifold consisting of rank-one semi-positive definite elements. The composition of the cotangent bundle map $\pi_X$: $T^*X\to X$ with the canonical map $X\to \mathbb{C}P^{n-1}$ (i.e., the map that sends a hermitian matrix to its column space), pulls back the K\"{a}hler form of the Fubini-Study metric on $\mathbb{C}P^{n-1}$ to a real closed differential two-form $\omega_K$ on $T^*X$. Let $\omega_X$ be the canonical symplectic form on $T^*X$ and $\mu$ be a real number. A standard fact says that $\omega_\mu:=\omega_X+2\mu\,\omega_K$ turns $T^*X$ into a symplectic manifold, hence a Poisson manifold with Poisson bracket $\{\, ,\,\}_\mu$. 

In this article we exhibit a Poisson realization of the simple real Lie algebra $\mathfrak {su}(n, n)$ on the Poisson manifold $(T^*X, \{\, ,\,\}_\mu)$, i.e., a Lie algebra homomorphism from $\mathfrak {su}(n, n)$ to $\left(C^\infty(T^*X, \mathbb R), \{\, ,\,\}_\mu\right)$. Consequently one obtains the Laplace-Runge-Lenz vector for the classical $\mathrm{U}(1)$-Kepler problem with level $n$ and magnetic charge $\mu$. Since the McIntosh-Cisneros-Zwanziger-Kepler problems (MICZ-Kepler Problems) are the $\mathrm{U}(1)$-Kepler problems with level $2$, the work presented here is a direct generalization of the work by A. Barut and G. Bornzin [ {\em J. Math. Phys.} {\bf 12} (1971), 841-843] on the classical dynamic symmetry for the MICZ-Kepler problems.

\smallskip
\noindent \textbf{Keywords.} Kepler problem, Jordan algebra, dynamic symmetry, Laplace-Runge-Lenz vector. 
\end{abstract}

\maketitle
 
\section {Introduction}
Let $\mathfrak g$ be a real Lie algebra. A {\bf Poisson realization} of $\mathfrak g$ on a Poisson manifold $M$ is a Lie algebra homomorphism from $\mathfrak g$ to $\left(C^\infty(M, \mathbb R), \{\, ,\,\}\right)$. It has been known for more than 40 years that $\mathfrak{so}(2,4)$ has a Poisson realization on $M=T^*\mathbb R^3_*$ from which one can reproduce the Kepler problem --- the mathematical model for the simplest solar system.  Here $\mathbb R^3_*:=\mathbb R^3\setminus\{\mathbf 0\}$ is the configuration space for the Kepler problem. As far as we know, this Poisson realization, more precisely its quantized form, was initially discovered \footnote{ For the prehistory of this important discovery about the Kepler problem, one may consult Footnote 2 in Ref. \cite{Barut67}.}  by A.O. Barut and H. Kleinert \cite{Barut67} in 1967.  

A discovery made by H. McIntosh and A. Cisneros \cite{MC70} and independently by D. Zwanziger \cite{Z68} says that the Kepler problem belongs to a family of dynamic models which share the characteristic feature of the Kepler problem, such as the existence of an analogue of the Lapace-Runge-Lenz vector.  These models, refereed to as the {\bf MICZ-Kepler problems} (or MIC-Kepler problems) in the literature, are indexed by a real parameter $\mu$ (called the magnetic charge) with the Kepler problem corresponding to $\mu =0$.    
 
Soon after the discovery of the MICZ-Kepler problems, it was realized that the aforementioned Poisson realization of $\mathfrak{so}(2,4)$ on the phase space of the Kepler problem has an analogue for each MICZ-Kepler problem. Indeed, an explicit quantized form of these Poisson realizations are given by Eqns (A1) and (A14) in Ref. \cite{Barut71}.  

In the literature each such individual Poisson realization of $\mathfrak{so}(2,4)$ is referred to as the {\bf classical dynamic symmetry} for the corresponding MICZ-Kepler problem. Formally, given a dynamic problem $P$ whose phase space is a Poisson manifold $M$,  a Poisson realization $\mathcal R$ of certain real Lie algebra $\frak g$ on $M$, if it exists, is called the classical dynamic symmetry for $P$ provided that $P$ and its solutions can be completely derived from $\mathcal R$. In this sense, the classical dynamic symmetry for the isotropic oscillator in dimension $n$, with $\mathfrak g=\mathfrak{sp}_{2n}(\mathbb R)$, is  also known in the literature. More recently, the classical dynamic symmetry for the magnetized Kepler problems in odd dimension $n=2k+1$, with $\mathfrak g=\mathfrak{so}(2, 2k+2)$, is explicitly given in Ref. \cite{meng2013b}.

Very recently the classical $\mathrm{U}(1)$-Kepler problems have been introduced by the second author \cite{meng2014}, along with their trajectory analysis via an idea originated from Levi-Civita \cite{Levi-Civita1920}. This family of models is indexed by two parameters: an integral parameter $n \ge 2$ and a real parameter $\mu$, and its subfamily with $n=2$ is precisely the family of MICZ-Kepler problems. So it is natural for us to extend the classical dynamic symmetry analysis from the MICZ-Kepler problems to the $\mathrm{U}(1)$-Kepler problems. 

\vskip 10pt
In Section \ref{Jordan algebra} we shall give a quick review of Euclidean Jordan algebras \cite{PJordan33}. This review is primarily based on the book by J. Faraut and A. Kor\'{a}nyi \cite{FK94}. In Section \ref{dynamic symmetry} the classical dynamic symmetry for the Jordan-algebra-based generalized (unmagnetized) Kepler problems \cite{meng2013}, is given. In principle, this classical dynamic symmetry can be deduced from its quantized version in Ref. \cite{meng2013},  but we shall give it a direct verification.  In section \ref{symmetry} we present a family of Poisson realizations for $\frak{su}(n,n)$, summarized in Theorem \ref{main}.  The proof of this theorem is very length, so Section \ref{S:proof} is devoted to it exclusively. As a consequence of this theorem,  we obtain the Laplace-Runge-Lenz vector for any $\mathrm{U}(1)$-Kepler problem. In the final section we describe and prove some quadratic relations concerning this family of Poisson realizations, summarized in Theorem \ref{QRelations}. As a corollary of this last theorem, for each $\mathrm{U}(1)$-Kepler problem, we derive a formula connecting its Hamiltonian to its angular momentum and its Laplace-Runge-Lenz vector, generalizing the formula given by Eq. (2.8) of Ref. \cite{meng2012}.

 \section{Euclidean Jordan algebras}\label{Jordan algebra}
 Let $V$ be a simple Euclidean Jordan algebra, which means that $V$ is both a simple Jordan algebra and an Euclidean vector space such that the Jordan multiplication by any element $u\in V$, an endomorphism on $V$ which is denoted by $L_u$, is self-adjoint with respect to its inner product $\langle \, | \, \rangle$. In our convention the identity element $e$ of $V$ is assumed to be a unit vector, so
 \[
 \langle u | v \rangle ={1\over \rho}\tr (uv)
 \] where $\rho$ is the rank of $V$, $uv$ is the Jordan multiplication of $u$ with $v$, $\tr$ means the trace. To say $V$ is a Jordan algebra means that the bilinear map $(u, v)\mapsto uv$ is symmetric and satisfies the Jordan identity: $L_u\circ L_{u^2}=L_{u^2}\circ L_u$ for any $u\in V$.

In the following we shall identify $V$ with $V^*$ via this map:
\[
u\in V \mapsto \langle u | \, \rangle: V\to \mathbb R.
\]
 
We shall use $S_{uv}$ to denote the endomorphism $[L_u, L_v]+L_{uv}$ for each $u, v\in V$. Note that $S_{ue}=S_{eu}=L_u$. Let us denote $S_{uv}(w)$ by $\{uvw\}$, then we have
\[
[S_{uv}, S_{zw}]=S_{\{uvz\}w}-S_{z\{vzw\}},
\] so these $S_{uv}$ span a real Lie algebra, referred to as the \emph{structure algebra} for $V$, and denoted by $\mathfrak{str}(V)$. The \emph{conformal algebra} of $V$, denoted by $\mathfrak{co}(V)$, is an extension of the structure algebra $\mathfrak{str}(V)$. As a real vector space, we have
\[
\mathfrak{co}(V)=V\oplus \mathfrak{str}(V)\oplus V^*.
\]
By writing $z\in V$ as $X_z$, $\langle w\mid\; \rangle\in V^*$ as $Y_w$, the commutation relations on $\mathfrak{co}(V)$ can be written as follow:
for $u$, $v$, $z$, $w$ in $V$, 
\begin{eqnarray}
\left\{\begin{matrix}[X_u, X_v] =0, \quad [Y_u, Y_v]=0, \quad [X_u,
Y_v] = -2S_{uv},\cr\\ [S_{uv},
X_z]=X_{\{uvz\}}, \quad [S_{uv}, Y_z]=-Y_{\{vuz\}},\cr\\
[S_{uv}, S_{zw}] = S_{\{uvz\}w}-S_{z\{vuw\}}.
\end{matrix}\right.\nonumber
\end{eqnarray}
Note that, when the Jordan algebra is $\Gamma(3)$: $\mathbb R\oplus \mathbb R^3$ (a linear subspace of the real Clifford algebra $\mathrm{Cl}(\mathbb R^3, \mbox{dot product})$) with the product being the symmetrized Clifford multiplication, we have $\frk{co}(V)=\frk{so}(2,4)$ --- the conformal algebra of the Minkowski space, and $\frk{str}(V)=\frk{so}(1,3)\oplus \bb R$. For the case concerning us, $V=\mathrm{H}_n(\mathbb C)$  and $\frk{co}(V)=\frk{su}(n,n)$. Since $\mathrm{H}_2(\mathbb C)\cong \Gamma(3)$ as Jordan algebra, it is not a surprise that $\frk{su}(2,2)\cong\frk{so}(2,4)$ as Lie algebra.

\section{The classical dynamic symmetry for the generalized (unmagnetized) Kepler problems}\label{dynamic symmetry}
In Ref. \cite{meng2013} the second author introduced the Jordan-algebra-based generalized (unmagnetized) Kepler problems, both the quantum models and the classical models. While a few aspects of the quantum models, such as their bound state problem and the quantum dynamic symmetry, are studied in that reference, not a single aspect of the classical models is studied there. In this section we shall devote our attention to the classical dynamic symmetry. This is an easy aspect for the classical models because it can be deduced quickly from its quantum analogue, but the main point here is to set the stage for the study of the classical dynamic symmetry of $\mr{U}(1)$ Kepler problems as well as the generic (magnetized) Kepler problems in the future. 

\subsection{Kepler cones}
As before $V$ denotes a simple Euclidean Jordan algebra with rank $\rho$. We shall also consider $V$ as an Euclidean space, i.e., a smooth space (i.e. a manifold) with the Riemannian metric
\begin{eqnarray}\label{RiemMetric}
\mathrm{d}s^2 = \langle \mathrm{d}x \mid \mathrm{d}x\rangle. 
\end{eqnarray}Here $x$ is the identity map on $V$, but viewed as a map from the smooth space $V$ to the vector space $V$ so that  $\mathrm{d}x$, being the total differential of this vector-valued smooth function, is a vector-valued differential one-form on the smooth space $V$.

For each positive integer $k$ which is at most $\rho$, we let $\mathcal C_k(V)$ or simply $\mathcal C_k$ be the set of rank $k$ semi-positive elements of $V$. It is a fact \cite{meng2013} that $\mathcal C_k$ is a submanifold of $V$ and the tangent space of $\mathcal C_k$ at a point $x$ is 
\[
\{x\}\times \mathrm{Im} L_x
\] where $\mathrm{Im} L_x$ denotes the image of the linear map $L_x$. Moreover, the structure group of $V$ acts on $\mathcal C_k$ homogeneously, whose cotangent lift is a symplectic action on $T^*\mathcal C_k$. This implies that we have a Poisson realization of the structure algebra $\mathfrak{str}$ on the Poisson manifold $T^*\mathcal C_k$.  The surprise is that, this Poisson realization of $\mathfrak{str}$ can be extended to a Poisson realization of the conformal algebra $\mathfrak{co}$.

Before presenting this Poisson realization of $\mathfrak{co}$ on $T^*\mathcal C_k$, we need to do some preparations. First of all, $T^*\mathcal C_k$ shall be identified with $T\mathcal C_k$ via the Riemannian metric \eqref{RiemMetric}. With this identification understood,  $T\mathcal C_k$ becomes a Poisson manifold. Next, we write the inclusion map 
\[
T\mathcal C_k\hookrightarrow TV=V\times V
\] as $(x, \pi)$, and view both $x$ and $\pi$ as vector-valued smooth functions on $T\mathcal C_k$. Note that, at any point $Q$ of  $T\mathcal C_k$, $x(Q)\in \mathcal C_k$ and
$\pi(Q)\in \mathrm{Im} L_{x(Q)}$. 

We use $q^i$ to denote a system of local coordinates on $\mathcal C_k$, $\partial_{q^i}$ to denote the resulting local tangent frame, and let
\[
g_{ij}:=\langle \partial_{q^i} |\partial_{q^i}\rangle, \quad g:=[g_{ij}], \quad g^{ij}: =(g^{-1})_{ij}, \quad E^i=g^{ij}\partial_{q^i}.
\]
Under the identification of  $T^*\mathcal C_k$ with $T\mathcal C_k$ mentioned early, one can see that the local cotangent frame $(\mathrm{d} q^1,\mathrm{d} q^2, \ldots)$ becomes the local tangent frame $(E^1, E^2, \ldots)$, in terms of which we can write
\[
\pi =p_iE^i.
\] 
Also,  under the natural identification of $T_{x}V$ with $V$, we have
$\partial_{q^i}=\frac{\partial x}{\partial q^i}$. Since $\langle E^j | \partial_{q^i} \rangle =\mathrm{d}q^j(\partial_{q^i} )= \delta^j_i$, 
we know that $ \langle E^i | v \rangle \partial_{q^i} |_{x}$ is the orthogonal projection of $v$ onto 
$\mathrm{Im}L_{x}$. So, if we denote by $\bar v$ the function which maps $x\in\mathcal C_k$ to the orthogonal projection of $v$ onto $\mathrm{Im}L_{x}$, then
\begin{eqnarray}\label{projection}
\langle u | \partial_{q^i}\rangle \langle E^i | v \rangle = \langle u |\bar v\rangle.
\end{eqnarray}
For notational sanity, we use the same notation for both a local function on $\mathcal C_k$ and its pull-back under the tangent bundle map $\tau$: $T\mathcal C_k \to \mathcal C_k$. For example, $q^i$ denotes both a local function on $\mathcal C_k$ and its pullback to  $T\mathcal C_k$. 
\begin{Lem}\label{lem: unmagnetized}
For any vectors $u, v\in V$, viewed as constant vector-valued functions on $T\mathcal C_k$, we have
\begin{eqnarray}\label{basicP}
\left\{
\begin{array}{rcl}
\{\langle u | x\rangle, \langle v | x\rangle\} &= &0,\\
\\
 \{\langle u | x\rangle, \langle v | \pi \rangle\} &= & \langle u | \bar v\rangle, \\
 \\
 \{\langle u | \pi \rangle, \langle v | \pi\rangle\} &= & p_ig^{il} \left\langle  \bar {\bar u}  \suchthat{{\partial x\over \partial q^l}}  {\partial^2 x\over \partial q^j\partial q^l}\right\rangle \langle v |E^j \rangle - \langle u\leftrightarrow v \rangle
\end{array}\right.
\end{eqnarray} where $ \bar {\bar u} =u-\bar u$, and  $\langle u\leftrightarrow v \rangle$ denotes the preceding term with $u$ and $v$ being switched. Consequently, for functions $\tilde u, \tilde v$ on $\mathcal C_k$ whose value at $x\in \mathcal C_1$ is inside $\mbox{Im} L_x$, we have
\begin{eqnarray}\label{SpecialP}
 \left\{\langle \tilde u | \contraction{}{\pi}{\rangle, \langle \tilde v |}{\pi}  \pi \rangle, \langle \tilde v | \pi\rangle\right\} =0.
\end{eqnarray}
Here $\contraction{}{\pi}{\cdots }{\pi} \pi \cdots\pi$ means only the Poisson bracket between the two $\pi$'s is counted. 

\end{Lem}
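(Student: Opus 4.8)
The plan is to compute all three brackets of \eqref{basicP} directly in canonical Darboux coordinates $(q^i,p_i)$ on $T^*\mathcal C_k\cong T\mathcal C_k$, where the momenta $p_i$ are fixed by $\pi=p_iE^i$ and $\{f,g\}=\partial_{q^i}f\,\partial_{p_i}g-\partial_{p_i}f\,\partial_{q^i}g$. The functions $\langle u\mid x\rangle$ and $\langle v\mid x\rangle$ depend only on the $q$'s, so their bracket vanishes, which is the first identity. For the second identity I would write $\langle v\mid\pi\rangle=p_i\langle v\mid E^i\rangle$; since $\langle u\mid x\rangle$ is $p$-independent and $\partial_{q^i}\langle u\mid x\rangle=\langle u\mid\partial_{q^i}\rangle$, the bracket collapses to $\langle u\mid\partial_{q^i}\rangle\langle v\mid E^i\rangle$, which is $\langle u\mid\bar v\rangle$ by the projection identity \eqref{projection}.

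The third identity is the substantial one. Setting $a^i=\langle u\mid E^i\rangle$ and $b^i=\langle v\mid E^i\rangle$ (functions of $q$ alone), the canonical bracket of the two momentum-linear functions is $\{\langle u\mid\pi\rangle,\langle v\mid\pi\rangle\}=p_i(b^k\partial_{q^k}a^i-a^k\partial_{q^k}b^i)$. Two facts turn this into a manifestly geometric object. First, for any tangent field $W=W^i\partial_{q^i}$ one has $p_iW^i=\langle W\mid\pi\rangle$, because $\langle\partial_{q^i}\mid E^j\rangle=\delta^j_i$. Second, by the discussion preceding \eqref{projection}, $\bar u=a^i\partial_{q^i}$ and $\bar v=b^i\partial_{q^i}$ are precisely the orthogonal projections of the constant vectors $u,v$ onto $\mathrm{Im}\,L_x$. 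Viewing $\bar u,\bar v$ as $V$-valued functions $a^i\,\partial x/\partial q^i$ etc.\ and taking ambient directional derivatives, the coordinate expression above equals $\langle D_{\bar v}\bar u-D_{\bar u}\bar v\mid\pi\rangle$; here the second-derivative contributions $a^ib^k\langle\partial^2x/\partial q^k\partial q^i\mid\pi\rangle$ cancel because $\partial^2x/\partial q^k\partial q^i$ is symmetric in its two indices.

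The crux, and the step I expect to be the main obstacle, is to recast $\langle D_{\bar v}\bar u\mid\pi\rangle$ in the advertised normal-projection form. I would use $\bar u=u-\bar{\bar u}$ with $D_{\bar v}u=0$ (constant vector), giving $D_{\bar v}\bar u=-D_{\bar v}\bar{\bar u}$, and then differentiate the orthogonality relation $\langle\bar{\bar u}\mid\partial x/\partial q^l\rangle=0$ along $\bar v=b^k\partial_{q^k}$ to obtain $\langle D_{\bar v}\bar{\bar u}\mid\partial x/\partial q^l\rangle=-b^k\langle\bar{\bar u}\mid\partial^2x/\partial q^k\partial q^l\rangle$. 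Pairing with $\pi=p_ig^{il}\,\partial x/\partial q^l$ then gives $\langle D_{\bar v}\bar u\mid\pi\rangle=p_ig^{il}\langle\bar{\bar u}\mid\partial^2x/\partial q^j\partial q^l\rangle\langle v\mid E^j\rangle$, and subtracting the $u\leftrightarrow v$ term produces exactly the third identity of \eqref{basicP}. One could instead grind this out through derivatives of $g^{ij}$ and Christoffel symbols, but differentiating the orthogonality relation keeps the normal part $\bar{\bar u}$ explicit and sidesteps that bookkeeping.

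Finally, I would deduce \eqref{SpecialP} as an immediate corollary. The contraction convention records only the bracket between the two $\pi$'s, so it amounts to evaluating the third identity at the pointwise values $u=\tilde u(x)$ and $v=\tilde v(x)$, treated as constants. By hypothesis these values lie in $\mathrm{Im}\,L_x$, hence coincide with their own orthogonal projections onto $\mathrm{Im}\,L_x$; therefore $\bar{\bar{\tilde u}}=\tilde u-\overline{\tilde u}=0$ and likewise $\bar{\bar{\tilde v}}=0$. Each term of the third identity carries a factor $\langle\bar{\bar{\tilde u}}\mid\cdot\rangle$ or $\langle\bar{\bar{\tilde v}}\mid\cdot\rangle$, so both vanish and the contracted bracket is $0$.
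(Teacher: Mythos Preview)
Your argument is correct. For the first two brackets and for the consequence \eqref{SpecialP} you do exactly what the paper does. For the third bracket, however, you take a genuinely different route: you interpret $\{\langle u\mid\pi\rangle,\langle v\mid\pi\rangle\}$ as $\langle D_{\bar v}\bar u-D_{\bar u}\bar v\mid\pi\rangle$ and then differentiate the orthogonality $\langle\bar{\bar u}\mid\partial x/\partial q^l\rangle=0$ along $\bar v$ to read off the tangential part of $D_{\bar v}\bar u$; since $\pi$ is tangential this gives $\langle D_{\bar v}\bar u\mid\pi\rangle$ directly in the desired form. The paper instead computes $\{\langle u\mid E^i\rangle,p_j\}$ head-on, expanding $E^i=g^{il}\partial_{q^l}$, differentiating $g^{il}$ via $\partial_{q^j}g^{il}=-g^{im}g^{nl}\partial_{q^j}g_{mn}$, rewriting $\partial_{q^j}g_{mn}$ through second derivatives of $x$, and only at the end recognizing the combination $u-\bar u=\bar{\bar u}$; a residual symmetric term then cancels under $u\leftrightarrow v$. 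Your approach is shorter and keeps the normal component $\bar{\bar u}$ visible from the outset, at the cost of one extra observation (that $\pi\in\mathrm{Im}\,L_x$ so only tangential components of $D_{\bar v}\bar u$ matter); the paper's approach is more mechanical but self-contained in coordinates. Your closing remark that one ``could instead grind this out through derivatives of $g^{ij}$'' is precisely what the paper does.
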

\begin{proof} 
The verification of the Poisson bracket relations in Eq. \eqref{basicP} is based on the local canonical Poisson bracket relations:
\begin{eqnarray}\label{canonicalP}
\{q^i, q^j\}=0, \quad \{q^i, p_j\}=\delta^i_j, \quad \{p_i, p_j\}=0.
\end{eqnarray} 

Since $x$ depends on $q$ only, we have
\[\{\langle u | x\rangle, \langle v | x\rangle\}=0.\]

Next, we have
\begin{eqnarray}
\{\langle u | x\rangle, \langle v | \pi \rangle\} &= & \{\langle u | x\rangle, p_i \langle v | E^i \rangle\}\cr
&=& \{\langle u | x\rangle, p_i \} \langle v | E^i \rangle \quad\mbox{both $x$ and $E_i$ depend on $q$ only}\cr
&=& \langle u | \partial_{q^i}\rangle \langle E^i | v \rangle\quad \mbox{using Eq. \eqref{canonicalP}}\cr
&=&\langle u |\bar v\rangle \quad\mbox{using Eq. \eqref{projection}.}\nonumber
\end{eqnarray}

Similarly, we have
\begin{eqnarray}
\{\langle u | \pi\rangle, \langle v | \pi \rangle\} &= & \{\langle u | p_iE^i\rangle, \langle v | p_jE^j \rangle\}\cr
&=&p_i\{\langle u | E^i\rangle,  p_j\}\langle v |E^j \rangle + \langle u | E^i\rangle\{p_i, \langle v | E^j \rangle\}p_j\cr
&=& p_i\{\langle u | E^i\rangle,  p_j\}\langle v |E^j \rangle - \langle u\leftrightarrow v \rangle.
\nonumber
\end{eqnarray}  
Since
\begin{eqnarray}
\{\langle u | E^i\rangle,  p_j\}   &= & \{\langle u | g^{il}\partial_{q^l}\rangle,  p_j\}  \cr\cr
&= & \left\{g^{il} \left\langle u \suchthat{{\partial x\over \partial q^l}}  {\partial x\over \partial q^l}\right\rangle,  p_j\right\}  \cr\cr
&= & g^{il} \left\langle u \suchthat{{\partial x\over \partial q^l}}  {\partial^2 x\over \partial q^j\partial q^l}\right\rangle  + {\partial g^{il}\over \partial q^j} \left\langle u \suchthat{{\partial x\over \partial q^l}}  {\partial x\over \partial q^l}\right\rangle \cr\cr
&= & g^{il} \left\langle u \suchthat{{\partial x\over \partial q^l}}  {\partial^2 x\over \partial q^j\partial q^l}\right\rangle  - g^{im} g^{nl} {\partial g_{mn}\over \partial q^j} \left\langle u \suchthat{{\partial x\over \partial q^l}}  {\partial x\over \partial q^l}\right\rangle \cr\cr
&= & g^{il} \left\langle u \suchthat{{\partial x\over \partial q^l}}  {\partial^2 x\over \partial q^j\partial q^l}\right\rangle  - g^{im} {\partial g_{mn}\over \partial q^j} \langle u | E^n\rangle \cr\cr
&= & g^{il} \left\langle u \suchthat{{\partial x\over \partial q^l}}  {\partial^2 x\over \partial q^j\partial q^l}\right\rangle  - g^{im} \partial_{q^j}\left(\left\langle {\partial x\over \partial q^m}\suchthat{{\partial x\over \partial q^m}}{\partial x\over \partial q^n}\right\rangle\right) \langle u | E^n\rangle \cr\cr
&= & g^{il} \left\langle u \suchthat{{\partial x\over \partial q^l}}  {\partial^2 x\over \partial q^j\partial q^l}\right\rangle  - g^{im} \left\langle {\partial^2 x\over \partial q^j\partial q^m}\suchthat{{\partial x\over \partial q^m}}\bar u\right\rangle - \left\langle E^i\suchthat{{\partial x\over \partial q^m}}{\partial^2 x\over \partial q^j\partial q^n}\right\rangle \langle u | E^n\rangle \cr\cr
&=& g^{il} \left\langle  \bar {\bar u} \suchthat{{\partial x\over \partial q^l}}  {\partial^2 x\over \partial q^j\partial q^l}\right\rangle- \left\langle E^i\suchthat{{\partial x\over \partial q^m}}{\partial^2 x\over \partial q^j\partial q^n}\right\rangle \langle u | E^n\rangle, \nonumber
\end{eqnarray}
we have
\begin{eqnarray}
\{\langle u | \pi\rangle, \langle v | \pi \rangle\} &=& p_i\{\langle u | E^i\rangle,  p_j\}\langle v |E^j \rangle - \langle u\leftrightarrow v \rangle\cr
&=& p_i \left(g^{il} \left\langle  \bar {\bar u}  \suchthat{{\partial x\over \partial q^l}}  {\partial^2 x\over \partial q^j\partial q^l}\right\rangle -\left\langle E^i\suchthat{{\partial x\over \partial q^m}}{\partial^2 x\over \partial q^j\partial q^n}\right\rangle \langle u | E^n\rangle\right)\langle v |E^j \rangle - \langle u\leftrightarrow v \rangle\cr
&=& p_ig^{il} \left\langle  \bar {\bar u}  \suchthat{{\partial x\over \partial q^l}}  {\partial^2 x\over \partial q^j\partial q^l}\right\rangle \langle v |E^j \rangle - \langle u\leftrightarrow v \rangle.\nonumber
\end{eqnarray}

\end{proof}

\subsection{The classical dynamic symmetry} We are now ready to state the Poisson realization of the conformal algebra $\mathfrak{co}$ on $T\mathcal C_k$  --- the dynamical symmetry for the generalized Kepler problem with configuration space $\mathcal C_k$.  
\begin{Thm}\label{minor}For any vectors $u$, $v$ in $V$,  define functions 
\begin{eqnarray}
\mathcal X_u:=\langle x| \{\pi u\pi\}\rangle, \quad \mathcal S_{uv}: = \langle S_{uv}(x) | \pi\rangle, \quad \mathcal Y_v:=\langle v | x\rangle 
\end{eqnarray}
on $T\mathcal C_k$. Then,  for any vectors $u$, $v$, $z$, $w$ in $V$,  the following Poisson bracket relations hold:
\begin{eqnarray}
\left\{
\begin{matrix}
\{\mathcal X_u, \mathcal X_v\} =0, \quad \{\mathcal Y_u, \mathcal Y_v\}=0, \quad \{\mathcal X_u,
\mathcal Y_v\} = -2\mathcal S_{uv},\cr\\ 
\{\mathcal S_{uv}, \mathcal X_z\}=\mathcal X_{\{uvz\}}, \quad \{\mathcal S_{uv}, \mathcal Y_z\} = -\mathcal Y_{\{vuz\}},\cr\\
\{\mathcal S_{uv}, \mathcal S_{zw}\} = \mathcal S_{\{uvz\}w}-\mathcal S_{z\{vuw\}}.
\end{matrix}\right.\nonumber
\end{eqnarray}
\end{Thm}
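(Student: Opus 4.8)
The plan is to show that the assignment $X_u\mapsto\mathcal X_u$, $S_{uv}\mapsto\mathcal S_{uv}$, $Y_v\mapsto\mathcal Y_v$ is a Lie algebra homomorphism from $\mathfrak{co}(V)$ into $\left(C^\infty(T\mathcal C_k,\mathbb R),\{\,,\,\}\right)$; the six identities are exactly the bracket relations of the $3$-graded decomposition $\mathfrak{co}(V)=V\oplus\mathfrak{str}\oplus V^*$. I would prove four of them directly from Lemma \ref{lem: unmagnetized} and then obtain the remaining two formally. The working tools are the brackets \eqref{basicP}, the vanishing \eqref{SpecialP}, and the Jordan identities: adjointness $\langle S_{ab}(c)\mid d\rangle=\langle c\mid S_{ba}(d)\rangle$, the triple symmetry $\{abc\}=\{cba\}$, the relation $S_{ue}=L_u$, and the structure relation $[S_{uv},S_{zw}]=S_{\{uvz\}w}-S_{z\{vuw\}}$. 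I will use repeatedly that $S_{uv}(x)\in\mathrm{Im}\,L_x$, because the structure group preserves $\mathcal C_k$ and $S_{uv}(x)$ is the value of the corresponding fundamental vector field.

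The relation $\{\mathcal Y_u,\mathcal Y_v\}=0$ is the first line of \eqref{basicP}. For $\{\mathcal S_{uv},\mathcal Y_z\}$, both $S_{uv}(x)$ and $\langle z\mid x\rangle$ depend on $x$ alone, so only the $\pi$ in $\mathcal S_{uv}=\langle S_{uv}(x)\mid\pi\rangle$ is active; the second line of \eqref{basicP} (with $\overline{S_{uv}(x)}=S_{uv}(x)$) gives $-\langle z\mid S_{uv}(x)\rangle$, which adjointness rewrites as $-\langle\{vuz\}\mid x\rangle=-\mathcal Y_{\{vuz\}}$. For $\{\mathcal S_{uv},\mathcal S_{zw}\}$ the two factors are momenta whose coefficients $S_{uv}(x)$ and $S_{zw}(x)$ lie in $\mathrm{Im}\,L_x$; hence the frozen-$\pi$ part of the bracket vanishes by \eqref{SpecialP}, and a short computation shows that the remaining cross terms assemble into the momentum $\langle[S_{uv},S_{zw}](x)\mid\pi\rangle$ of the commutator vector field. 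The structure relation then produces $\mathcal S_{\{uvz\}w}-\mathcal S_{z\{vuw\}}$.

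The crucial direct computation is $\{\mathcal X_u,\mathcal Y_v\}$. As $\mathcal Y_v$ is independent of $\pi$, only the $x$--$\pi$ bracket of \eqref{basicP} acts, differentiating the two $\pi$'s inside $\{\pi u\pi\}$; since $\{\langle b\mid\pi\rangle,\langle v\mid x\rangle\}=-\langle b\mid\bar v\rangle$, each slot effectively substitutes $-\bar v$, yielding $\{\mathcal X_u,\mathcal Y_v\}=-2\,\langle x\mid\{\bar v\,u\,\pi\}\rangle$, the factor $2$ coming from the two slots. Splitting $\bar v=v-\bar{\bar v}$, the $v$-part is $\langle x\mid\{vu\pi\}\rangle=\langle S_{uv}(x)\mid\pi\rangle=\mathcal S_{uv}$ by adjointness, while the defect $\langle x\mid\{\bar{\bar v}\,u\,\pi\}\rangle$ vanishes: reorganizing it through adjointness and the triple symmetry turns it into $\langle u\mid S_{\bar{\bar v}x}(\pi)\rangle$, and $S_{\bar{\bar v}x}(\pi)=[L_{\bar{\bar v}},L_x]\pi+L_{\bar{\bar v}x}\pi=0$ because $\bar{\bar v}x=0$ and, in the Peirce decomposition attached to the support idempotent of $x$, one has $\bar{\bar v}\in V_0$ and $x\in V_1$, whose multiplication operators commute. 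Hence $\{\mathcal X_u,\mathcal Y_v\}=-2\mathcal S_{uv}$.

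Finally, the two remaining relations come for free. The Poisson bracket lowers the total degree in $\pi$ by one, so $\{\mathcal S_{uv},\mathcal X_z\}-\mathcal X_{\{uvz\}}$ is homogeneous of degree $2$ in $\pi$ and $\{\mathcal X_u,\mathcal X_v\}$ is homogeneous of degree $3$. Feeding the already-proved relations into the Jacobi identity for $(\mathcal S_{uv},\mathcal X_z,\mathcal Y_w)$ and for $(\mathcal X_u,\mathcal X_v,\mathcal Y_w)$ shows that each of these expressions Poisson-commutes with every $\mathcal Y_w=\langle w\mid x\rangle$ --- the second case using the triple symmetry $\{uwv\}=\{vwu\}$ to cancel the two resulting $\mathcal X$-terms. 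But $\{F,\langle w\mid x\rangle\}=0$ for all $w$ forces $F$ to be independent of $\pi$, and a $\pi$-homogeneous function of positive degree that is $\pi$-independent must vanish; so both relations follow. I expect the genuine obstacle to be the computation of $\{\mathcal X_u,\mathcal Y_v\}$ above: the bookkeeping of the orthogonal projections $\bar v,\bar{\bar v}$ must be exact, and the key Jordan input is the Peirce operator-commutativity $[L_{V_0},L_{V_1}]=0$, which annihilates the normal-component defect --- the same tangency phenomenon by which \eqref{SpecialP} and $S_{uv}(x)\in\mathrm{Im}\,L_x$ dispose of the awkward second-derivative term of the $\pi$--$\pi$ bracket in the structure-algebra relation.
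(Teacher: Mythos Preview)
Your proof is correct, but it organizes the verification differently from the paper. The paper defers entirely to Ref.~\cite{meng2013} (``a verbatim copy of the proof for Theorem~3.1''), and from the parallel proof of Theorem~\ref{main} one sees that the intended method is to compute each bracket directly from \eqref{basicP}--\eqref{SpecialP}, with Jacobi used only to reduce $\{\mathcal S_{uv},\mathcal X_z\}$ to the special case $\{\mathcal L_u,\mathcal X_v\}=\mathcal X_{uv}$ and $\{\mathcal X_u,\mathcal X_v\}=0$ to $\{\mathcal X_u,\mathcal X_e\}=0$; those base cases are then checked by hand.

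Your route differs in two respects. First, for $\{\mathcal X_u,\mathcal Y_v\}$ you make the Jordan-algebraic content explicit: the defect term $\langle x\mid\{\bar{\bar v}\,u\,\pi\}\rangle$ is rewritten as $\langle u\mid S_{\bar{\bar v}x}(\pi)\rangle$ and killed by $\bar{\bar v}x=0$ together with the Peirce operator-commutativity $[L_{V_0(c)},L_{V_1(c)}]=0$ for the support idempotent $c$ of $x$. This is a clean conceptual explanation of why the normal component drops out. Second, and more substantially, you replace the direct verification of $\{\mathcal S_{uv},\mathcal X_z\}=\mathcal X_{\{uvz\}}$ and $\{\mathcal X_u,\mathcal X_v\}=0$ by a Jacobi-plus-homogeneity argument: the obstruction functions are $\pi$-homogeneous of positive degree and Poisson-commute with every $\mathcal Y_w$, hence are $p$-independent, hence vanish. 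This is legitimate (the $\partial_{q^i}$ are linearly independent in $V$, so $\{F,\mathcal Y_w\}=0$ for all $w$ really forces $\partial F/\partial p_i=0$), and it genuinely shortens the argument by eliminating the most laborious direct computations. Note only that the order matters: you must establish $\{\mathcal S_{uv},\mathcal X_z\}=\mathcal X_{\{uvz\}}$ first, since the Jacobi step for $\{\mathcal X_u,\mathcal X_v\}$ uses it.
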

\begin{proof}
The proof is a straightforward computation based on the Poisson bracket relations in Eq. \eqref{basicP}. Due to the Poisson bracket relation in Eq. \eqref{SpecialP}, this proof is really a verbatim copy of the proof for Theorem 3.1 in \cite{meng2013}, so it is skipped here.  
\end{proof}

\begin{rmk} The generalized Kepler problem corresponding to the Poisson realization in Theorem \ref{minor} is the Hamiltonian system with phase space $T\mathcal C_k$, Hamiltonian 
 \[
 H={1\over 2} {\mathcal X_e\over \mathcal Y_e} - {1\over \mathcal Y_e},
 \]
 and Laplace-Runge-Lenz vector
 \[
 \mathcal A_u={1\over 2}\left(\mathcal X_u - \mathcal Y_u {\mathcal X_e\over \mathcal Y_e}\right)+{\mathcal Y_u\over \mathcal Y_e}.
 \]
The interested readers may consult Ref. \cite{meng2014'} for more details on this point.
\end{rmk} 
The following subsection is a detailed demonstration of this remark for the Kepler problem.

\subsection{Example: Kepler problem and future light-cone}
The purpose here is to show explicitly a claim made by the 2nd author in the past:  if $V=\Gamma(3):=\mathbb R\oplus \mathbb R^3$, and $k=1$, the generalized Kepler problem is exactly the Kepler problem. In terms of the standard basis vectors $\vec e_0, \vec e_1, \vec e_2, \vec e_3$, the Jordan multiplication can be determined by the following rules: $\vec e_0$ is the identity element, and
\[
\vec e_i\vec e_j=\delta_{ij}\vec e_0
\] for $i, j >0$. The trace $\tr$: $V\to \mathbb R$ is given by the following rules:
\[
\tr \vec e_0 = 2, \quad \tr \vec e_i =0. 
\] So the inner product on $V$ is the one such that the standard basis is an orthonormal basis.
Since $V$ has rank two, the determinant of $x=x^\mu\vec e_\mu$ is
\[
\det x ={1\over 2} ((\tr x)^2-\tr x^2) = (x^0)^2-(x^1)^2-(x^2)^2-(x^3)^2. 
\] Therefore,
\[
\mathcal C_1=\{x\in V \suchthat{} \det x =0, \tr x > 0\}
\] is precisely the future light-cone in the Minkowski space.  It turns out that $\mathcal C_1$ has a global coordinate $q=(q^1, q^2, q^3)$ with $q^i(x)=x^i$. Since $x(q)=r\vec e_0+\vec r$ where $\vec r = q^i\vec e_i$ and $r$ is the length of $\vec r$, we have
\[
 \partial_{q^i} =\vec e_i +{q^i\over r}\vec e_0, \quad g_{ij} =\delta_{ij}+{q^iq^j\over r^2}, \quad g^{ij} = \delta_{ij}-{q^iq^j\over 2r^2}, \quad E^j = \vec e_j - {q^j\over 2r^2}\vec r+{q^j\over 2r}\vec e_0.
\] Here the first and last identities are understood with the natural identification of $T_x\mathcal C_1$ with $\mathrm{Im}L_x$ in mind. 

Let $\vec p= \sum _ip_i \vec e_i$ and $|\vec p|^2 =\vec p \cdot \vec p$. Since $\pi =p_iE^i= \vec p - \frac{\vec p\cdot \vec r}{2r^2}\vec r +\frac{\vec p\cdot \vec r}{2r}\vec e_0$, then $x\pi =(\vec p\cdot \vec r)\vec e_0 + r\vec p$, therefore
\begin{eqnarray}
\mathcal X_e &= & \langle x | \pi^2\rangle =  \langle x\pi | \pi\rangle\cr
&=&  \left\langle (\vec p\cdot \vec r)\vec e_0 + r\vec p\suchthat { \frac{\vec p\cdot \vec r}{2r}} \vec p - \frac{\vec p\cdot \vec r}{2r^2}\vec r +\frac{\vec p\cdot \vec r}{2r}\vec e_0\right\rangle\cr
&=&r|\vec p|^2.\nonumber
\end{eqnarray} 
Since $\mathcal Y_e=r$, we have the Hamiltonian
\[
H={1\over 2} {\mathcal X_e\over \mathcal Y_e} - {1\over \mathcal Y_e}={1\over 2} |\vec p|^2-{1\over r}.
\]
Similarly, one can compute $\mathcal X_{\vec e_i}=\langle x |\{\pi\vec e_i\pi\}\rangle$ and $\mathcal Y_{\vec e_i}=\langle x| \vec e_i\rangle$ and arrive at
\[
\sum_i\mathcal X_{\vec e_i} \vec e_i= 2(\vec r\cdot \vec p)\vec p - \vec r |\vec p|^2, \quad 
\sum_i  \mathcal Y_{\vec e_i}\vec e_i=\vec r.
\]
Then we arrive at the usual Lapace-Runge-Lenz vector for the Kepler problem:
\begin{eqnarray}
\vec A &:=&\sum_i A_{\vec e_i} \vec e_i = \sum_i \left( {1\over 2}\left(\mathcal X_{\vec e_i} - \mathcal Y_{\vec e_i}{\mathcal X_e\over \mathcal Y_e}\right)+{\mathcal Y_{\vec e_i}\over \mathcal Y_e}\right)\vec e_i\cr
&=& (\vec r\times \vec p)\times \vec p +{\vec r\over r}.\nonumber
\end{eqnarray}
\begin{rmk}
A far as we know, the fact that the Lapace-Runge-Lenz vector owes its existence to the dynamic symmetry was initially pointed out by the second author in Subsection 7.1 of Ref. \cite{meng2011}. 
\end{rmk}
 
\section{The classical dynamical symmetry for the $\mathrm{U}(1)$ Kepler problems}\label{symmetry}
In the remainder of this article  the simple Euclidean Jordan algebra $V$ is assumed to be $\mathrm{H}_n(\mathbb C)$ --- the Jordan algebra of complex hermitian matrices of order $n\ge 2$, and $\mu$ is assumed to be a real number. 

In this case $\mathcal C_1$ is homotopy equivalent to $\mathbb CP^{n-1}$, and a generator of $H^2(\mathcal C_1, \mathbb Z)\cong \mathbb Z$ can be chosen to be the cohomology class of the closed real differential two-form $\omega_K\over 2\pi$ where 
\begin{eqnarray}\label{Kepler form}
\omega_K:=-\mathrm{i}{\tr (x\;\mrd x\wedge \mrd x)\over (\tr x)^3 }
\end{eqnarray} and is called the Kepler form in Ref. \cite{meng2014}. On a topologically trivial coordinate patch, there is a real differential one-form $A=A_i\mrd q^i$ such that 
\[
\omega_K =  \mrd A.
\]

We shall also use $\omega_K$ to denote the pullback of $\omega_K$ under the cotangent bundle projection map $T^*{\mathcal C_1}\to \mathcal C_1$. Let $\omega_{\mathcal C_1}$ be the canonical symplectic form on $T^*\mathcal C_1$ and
\[
\omega_\mu:=\omega_{\mathcal C_1}+2\mu\, \omega_K.
\] On a topologically trivial coordinate patch we have
\[
\omega_\mu =\mrd p_i\wedge \mrd q^i + 2\mu\, \mrd A=\mrd (p_i+2\mu A_i)\wedge\mrd q^i,
\] we conclude that $\omega_\mu$ is a symplectic form on $T^*\mathcal C_1$.  As before we shall identify $T^*\mathcal C_1$ with $T\mathcal C_1$ via the inner product on $V$. In this case elements in $V$ are hermitian matrices, so, for any $u, v\in V$, we have matrix product $u\cdot v$, in terms of which, we have the commutator $[u, v]=u\cdot v-v\cdot u$ and the Jordan product $uv={1\over 2}(u\cdot v+v\cdot u)$.

\begin{Lem}\label{LemmaKey}
Assume that $x\in \mathcal C_1$. Let $u, v\in \mathrm{H}_n(\mathbb C)$ and $L_{u, v}=[L_u, L_v]$. 
\begin{enumerate}[(i)] 
\item We have $ x^2 =\tr x\, x$, consequently
\[
[x, ux]={\tr x\over 2}[x, u], \quad \tr(x[ux, v])={1\over 2}\tr x\; \tr(x[u, v]).
\]

\item The following identities hold: 
\[ \mathrm{i}[u,x]\in \mbox{Im} L_x, \quad  x\cdot u\cdot x =\tr(xu)\;x, \quad L_{u, v}x={1\over 4}[[u,v],x].
\] 
Consequently, we have
\begin{eqnarray}\label{keyID}
{1\over 2}[x, [x, u]]  = (\tr x)xu -\tr(xu)x. 
\end{eqnarray}

\item The following Poisson bracket relations on the symplectic manifold $(T\mathcal C_1, \omega_\mu)$ hold:
\[
\{\langle u | x\rangle, \langle v | x\rangle\}=0, \quad \{\langle u | x\rangle, \langle v | \pi \rangle\}=\langle u | \bar v\rangle, 
\]
and
\[
 \left\{\langle \tilde u | \contraction{}{\pi}{\rangle, \langle \tilde v |}{\pi}  \pi \rangle, \langle \tilde v | \pi\rangle\right\} = -2\mu \mathrm{i}{\tr\left(x[\tilde u,\tilde v)]\right)\over (\tr x)^3}
\] provided that $\tilde u, \tilde v$ are functions on $\mathcal C_1$ whose value at $x\in \mathcal C_1$ is inside $\mbox{Im} L_x$.
\end{enumerate}
\end{Lem}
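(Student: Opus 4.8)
The plan is to reduce everything to the rank-one factorization $x=\xi\xi^*$ of an element $x\in\mathcal C_1$, for which $\tr x=\xi^*\xi$ and hence $x^2=\xi(\xi^*\xi)\xi^*=\tr x\,x$. Part (i) is then pure trace algebra. Writing $ux$ for the Jordan product $\frac12(u\cdot x+x\cdot u)$ and expanding the commutator, the cross terms cancel and $[x,ux]=\frac12[x^2,u]=\frac{\tr x}{2}[x,u]$; the second identity then follows from the cyclicity relation $\tr(x[a,v])=\tr([x,a]v)$ applied with $a=ux$, giving $\tr(x[ux,v])=\tr([x,ux]v)=\frac{\tr x}{2}\tr(x[u,v])$.

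For part (ii), the identity $x\cdot u\cdot x=\xi(\xi^*u\xi)\xi^*=\tr(xu)\,x$ is immediate from the factorization, and $L_{u,v}x=\frac14[[u,v],x]$ is a purely algebraic identity requiring no rank-one hypothesis: expanding $u\circ(v\circ x)-v\circ(u\circ x)$ in matrix products, the terms $uxv$ and $vxu$ cancel, leaving $\frac14([u,v]x-x[u,v])$. For $\mr i[u,x]\in\mr{Im}\,L_x$ I would use that $\mr{Im}\,L_x=(\ker L_x)^\perp=\{M:PMP=0\}$, where $P=e-x/\tr x$ is the orthogonal projection onto $\xi^\perp$; since $Px=xP=0$ one gets $P(\mr i[u,x])P=\mr i\bigl(Pu(xP)-(Px)uP\bigr)=0$. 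Finally \eqref{keyID} is obtained by expanding $\frac12[x,[x,u]]=\frac12(x^2u+ux^2)-xux$ and substituting $x^2=\tr x\,x$ and $xux=\tr(xu)\,x$.

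Part (iii) is the substantive part. The first two brackets are inherited from Lemma \ref{lem: unmagnetized}, because replacing $\omega_{\mathcal C_1}$ by $\omega_\mu$ changes only the momentum bracket: writing $\omega_\mu=\mrd(p_i+2\mu A_i)\wedge\mrd q^i$ and setting $P_i:=p_i+2\mu A_i$, the pair $(q^i,P_i)$ is canonical, and re-expressing in $(q^i,p_i)$ leaves $\{q^i,q^j\}=0$, $\{q^i,p_j\}=\delta^i_j$ intact while producing $\{p_i,p_j\}=2\mu(\partial_iA_j-\partial_jA_i)=2\mu(\omega_K)_{ij}$. Since $\langle u|x\rangle$ depends on $q$ alone, the first two relations are verbatim as before. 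In the contracted $\pi$--$\pi$ bracket only the new term $\langle\tilde u|E^i\rangle\langle\tilde v|E^j\rangle\{p_i,p_j\}=2\mu\langle\tilde u|E^i\rangle\langle\tilde v|E^j\rangle(\omega_K)_{ij}$ survives. I then expand the Kepler form \eqref{Kepler form} in the frame $x_i:=\partial_{q^i}x$: from $\mrd x=x_i\,\mrd q^i$ one gets $\tr(x\,\mrd x\wedge\mrd x)=\tr(x\,x_ix_j)\,\mrd q^i\wedge\mrd q^j$, hence $(\omega_K)_{ij}=-\mr i\,\tr(x[x_i,x_j])/(\tr x)^3$. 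Because $\tilde u,\tilde v$ take values in $\mr{Im}\,L_x=\mr{span}\{x_i\}$, the dual-basis relation $\langle E^i|\partial_{q^j}\rangle=\delta^i_j$ lets me reconstruct $\tilde u=\langle\tilde u|E^i\rangle\,x_i$ and $\tilde v=\langle\tilde v|E^j\rangle\,x_j$; substituting collapses the sum to $\langle\tilde u|E^i\rangle\langle\tilde v|E^j\rangle\tr(x[x_i,x_j])=\tr(x[\tilde u,\tilde v])$, yielding the claimed value $-2\mu\mr i\,\tr(x[\tilde u,\tilde v])/(\tr x)^3$.

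I expect the main obstacle to be part (iii): one must correctly localize the magnetic correction to the single bracket $\{p_i,p_j\}$, and then match the intrinsic trace form of $\omega_K$ against its local-coordinate components. The one delicate point worth checking is that $\tilde u,\tilde v$ are assumed to lie in $\mr{Im}\,L_x$; this is exactly what permits the reconstruction $\tilde u=\langle\tilde u|E^i\rangle x_i$ and hence the passage from the coordinate commutators $[x_i,x_j]$ back to the invariant commutator $[\tilde u,\tilde v]$.
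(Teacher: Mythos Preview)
Your proposal is correct and follows essentially the same approach as the paper. The only cosmetic difference is in part (ii): where the paper reduces to the normal form $x=E_{11}$ and reads off $\mr i[u,x]\in\mr{Im}\,L_x$ and $x\cdot u\cdot x=\tr(xu)\,x$ by inspecting matrix entries, you work invariantly with the rank-one factorization $x=\xi\xi^*$ and the projector $P=e-x/\tr x$; the computations for $L_{u,v}x$, identity \eqref{keyID}, and all of part (iii) (modified $\{p_i,p_j\}$, appeal to Eq.~\eqref{SpecialP} for the vanishing of the unmagnetized piece, and reconstruction $\tilde u=\langle\tilde u|E^i\rangle\,\partial_{q^i}x$) are identical to the paper's.
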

\begin{proof}
\begin{enumerate}[(i)]
\item Since $x$ has rank $1$ and is diagonalizable, it is clear that $x^2 = \tr x x$. Then
\begin{eqnarray}
[x, ux] &=& \frac{1}{2}(x\cdot (u\cdot x +x\cdot u)-(u\cdot x +x\cdot u)\cdot x )\cr
&=&\frac{1}{2}(x^2\cdot  u-u\cdot x^2 ) =  \frac{\tr x}{2}[x, u] \nonumber
\end{eqnarray}
and
\begin{eqnarray}
 \tr(x[ux, v]) &= &\frac{1}{2}\tr ( x\cdot (u\cdot x\cdot v+x\cdot u\cdot v - v\cdot x\cdot u -v\cdot u\cdot x))\cr
 &=& \frac{1}{2}\tr ( x^2\cdot u\cdot v - x^2\cdot v\cdot u)\quad \mbox{using the fact that $\tr$ is cyclic}\cr
 &=& {1\over 2}\tr x\; \tr(x[u, v]).\nonumber
\end{eqnarray}

\item We may assume that $x$ is the matrix whose $(1,1)$-entry is $1$ and all other entries are zero. Then $\mbox{Im} L_x$ is the set consisting of hermitian matrices whose $(i, j)$-entry is zero if $i, j >1$. It is then clear that the hermitian matrix $\mathrm{i}[u,x]$ is an element of $\mbox{Im} L_x$. 

It is clear that $x\cdot u\cdot x$ is the matrix whose $(1, 1)$-entry is $u_{11}$( the $(1, 1)$-entry of $u$) and all other entries are zero. Since $\tr (xu)=u_{11}$, we have $x\cdot u\cdot x =\tr(xu)\;x$. Consequently
\begin{eqnarray}
\frac{1}{2}[x, [x, u]] &=& \frac{1}{2}\bigl( x\cdot (x\cdot u-u\cdot x)- (x\cdot u-u\cdot x)\cdot x\bigr)\cr
&=& x^2u-x\cdot u\cdot x\cr
&=& (\tr x) xu- \tr(xu)x. \nonumber
\end{eqnarray}

The identity $L_{u, v}x={1\over 4}[[u,v],x]$ actually holds for any $x, u, v$: 
\begin{eqnarray}
L_{u, v}x & = &u(vx) - <u\leftrightarrow v>\cr
&=&  \frac{1}{4}\left( u\cdot (v\cdot x+x\cdot v)+ (v\cdot x+x\cdot v)\cdot u - <u\leftrightarrow v> \right)\cr
&=&  \frac{1}{4}\left( u\cdot v\cdot x+u\cdot x\cdot v+ v\cdot x\cdot u+x\cdot v\cdot u - <u\leftrightarrow v> \right)\cr
&=& \frac{1}{4}\left( [u, v]\cdot x+x\cdot [v, u] \right)\cr
&=& {1\over 4}[[u,v],x].\nonumber
\end{eqnarray}

\item The proof is similar to the proof of Lemma \ref{lem: unmagnetized}. Note that, the local canonical Poission relations \eqref{canonicalP} is now changed to
\[
\{q^i, q^j\}=0, \quad \{q^i, p_j\} = \delta^i_j, \quad \{ p_i, p_j\}= -2\mu\mathrm{i}\frac{\tr \left(x \left[\frac{\partial x}{\partial q^i}, \frac{\partial x}{\partial q^j}\right]\right)}{(\tr x)^3 }.
\] 
So the proof of identities $\{\langle u | x\rangle, \langle v | x\rangle\}=0$ and $\{\langle u | x\rangle, \langle v | \pi \rangle\}=\langle u | \bar v\rangle$ is the same as before.

\underline{Proof of identity} $\left\{\langle \tilde u | \contraction{}{\pi}{\rangle, \langle \tilde v|}{\pi}  \pi \rangle, \langle \tilde v | \pi\rangle\right\} = -2\mu \mathrm{i}{\tr\left(x[\tilde u, \tilde v]\right)\over (\tr x)^3}$. 
\begin{eqnarray}
LHS &=& \langle \tilde u | E^i \rangle \langle \tilde v | E^j\rangle \{p_i, p_j\}+\mbox{other terms}\cr
&=& \langle \tilde u | E^i \rangle \langle \tilde v | E^j\rangle \{p_i, p_j\} + 0\quad \mbox{using Eq. \eqref{SpecialP}}\cr
&=& \langle \tilde u | E^i \rangle \langle \tilde v | E^j\rangle (-2\mu\mathrm{i})\frac{\tr \left(x \left[\frac{\partial x}{\partial q^i}, \frac{\partial x}{\partial q^j}\right]\right)}{(\tr x)^3 }\cr
&=& -2\mu \mathrm{i}{\tr(x[\tilde u, \tilde v])\over (\tr x)^3}\cr
&=& RHS. \nonumber
\end{eqnarray}

\end{enumerate}
\end{proof}

\begin{Thm}\label{main} For any vectors $u$, $v$ in $V:=\mathrm{H}_n(\mathbb C)$,  define functions 
\begin{eqnarray}\label{PoissonRMain}
\left\{
\begin{array}{rcl}
\mathcal X_u &:=&  \langle x|\{\pi u\pi\}\rangle +{n\mu^2\over (\tr x)^2}\tr(xu)-\mu \mathrm{i} {\tr(x[u, \pi ])\over \tr x}\\
\mathcal Y_v &:=& \langle v|x\rangle\\
\mathcal S_{uv} &:=&  \langle S_{uv}(x)| \pi \rangle -\mu \mathrm{i} {\tr(x[u, v])\over 2\, \tr x}
\end{array}\right.
\end{eqnarray}
on $T\mathcal C_1$. Then,  for any vectors $u$, $v$, $z$, $w$ in $V$,  the following Poisson bracket relations hold:
\begin{eqnarray}
\left\{
\begin{matrix}
\{\mathcal X_u, \mathcal X_v\} =0, \quad \{\mathcal Y_u, \mathcal Y_v\}=0, \quad \{\mathcal X_u,
\mathcal Y_v\} = -2\mathcal S_{uv},\cr\\ 
\{\mathcal S_{uv}, \mathcal X_z\}=\mathcal X_{\{uvz\}}, \quad \{\mathcal S_{uv}, \mathcal Y_z\} = -\mathcal Y_{\{vuz\}},\cr\\
\{\mathcal S_{uv}, \mathcal S_{zw}\} = \mathcal S_{\{uvz\}w}-\mathcal S_{z\{vuw\}}.
\end{matrix}\right.\nonumber
\end{eqnarray}
\end{Thm}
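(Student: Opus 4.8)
The plan is to leverage the fact that the $\mu=0$ case is exactly Theorem \ref{minor}, and to treat the magnetic terms perturbatively in $\mu$. First I would record the $\mu$-homogeneous decomposition
\[
\mathcal X_u=\mathcal X_u^{(0)}+\mu\,\mathcal X_u^{(1)}+\mu^2\mathcal X_u^{(2)},\qquad \mathcal Y_v=\mathcal Y_v^{(0)},\qquad \mathcal S_{uv}=\mathcal S_{uv}^{(0)}+\mu\,\mathcal S_{uv}^{(1)},
\]
where $\mathcal X_u^{(0)}=\langle x|\{\pi u\pi\}\rangle$, $\mathcal S_{uv}^{(0)}=\langle S_{uv}(x)|\pi\rangle$ and $\mathcal Y_v^{(0)}=\langle v|x\rangle$ are the generators of Theorem \ref{minor}, while $\mathcal X_u^{(1)}=-\mathrm i\,\tr(x[u,\pi])/\tr x$ is homogeneous of degree one in $\pi$, and $\mathcal X_u^{(2)}=n\,\tr(xu)/(\tr x)^2$ and $\mathcal S_{uv}^{(1)}=-\mathrm i\,\tr(x[u,v])/(2\tr x)$ are independent of $\pi$. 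The crucial structural remark, a consequence of the modified canonical brackets in Lemma \ref{LemmaKey}(iii), is that the Poisson bracket splits as $\{\,,\,\}_\mu=\{\,,\,\}_0+\mu\{\,,\,\}_B$, where $\{\,,\,\}_0$ is the canonical (unmagnetized) bracket and the magnetic correction $\{\,,\,\}_B$ involves only derivatives in the fibre variables $p_i$; in particular $\{f,g\}_B=0$ whenever $f$ or $g$ is a function of $x$ alone.

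With this in place, each of the six relations becomes a finite polynomial identity in $\mu$ whose coefficients I would check separately. In every case the $\mu^0$ coefficient is exactly the corresponding relation of Theorem \ref{minor}, so nothing new is needed there. The relations not involving $\mathcal X$ twice are comparatively light: since $\mathcal Y_v$ and $\mathcal S^{(1)}_{uv}$ are $\pi$-independent, the magnetic bracket drops out of $\{\mathcal Y_u,\mathcal Y_v\}$, $\{\mathcal X_u,\mathcal Y_v\}$ and $\{\mathcal S_{uv},\mathcal Y_z\}$, and the higher-order coefficients of $\{\mathcal S_{uv},\mathcal X_z\}$ and $\{\mathcal S_{uv},\mathcal S_{zw}\}$ collapse on the same $\pi$-degree grounds, leaving at most a linear (and, for $\{\mathcal S_{uv},\mathcal X_z\}$, also a quadratic) identity in $\mu$ to verify. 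These I would dispatch directly from the explicit formulas, using cyclicity of the trace and the rank-one identities of Lemma \ref{LemmaKey}(i)--(ii) — principally $x^2=\tr x\,x$, $x\cdot u\cdot x=\tr(xu)\,x$, $\mathrm i[u,x]\in\mathrm{Im}\,L_x$, and $\tr(x[ux,v])=\tfrac12\tr x\,\tr(x[u,v])$ — which let one rewrite the $\mathcal X^{(1)}$ and $\mathcal S^{(1)}$ contributions as pairings $\langle(\cdot)|\pi\rangle$ and match them against $-2\mathcal S^{(1)}_{uv}$, $\mathcal X^{(1)}_{\{uvz\}}$, $\mathcal X^{(2)}_{\{uvz\}}$, and $\mathcal S^{(1)}_{\{uvz\}w}-\mathcal S^{(1)}_{z\{vuw\}}$ respectively.

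The main obstacle is the first relation, $\{\mathcal X_u,\mathcal X_v\}=0$, the only one in which all three pieces of both arguments and both halves of the bracket participate. Expanding, the $\mu^4$ and $\mu^5$ coefficients vanish trivially on $\pi$-degree grounds, but the $\mu^1$, $\mu^2$ and $\mu^3$ coefficients each demand genuine cancellation. At order $\mu^1$ the canonical brackets $\{\mathcal X^{(0)}_u,\mathcal X^{(1)}_v\}_0+\{\mathcal X^{(1)}_u,\mathcal X^{(0)}_v\}_0$ must cancel the magnetic term $\{\mathcal X^{(0)}_u,\mathcal X^{(0)}_v\}_B$ supplied by Lemma \ref{LemmaKey}(iii). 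The delicate coefficients are $\mu^2$ and $\mu^3$: at order $\mu^2$ the canonical brackets $\{\mathcal X^{(0)}_u,\mathcal X^{(2)}_v\}_0+\{\mathcal X^{(2)}_u,\mathcal X^{(0)}_v\}_0$ coming from the purely algebraic term — whose presence, with exactly the factor $n$ equal to the rank of $\mathrm H_n(\mathbb C)$, is forced by this calculation — must cancel $\{\mathcal X^{(1)}_u,\mathcal X^{(1)}_v\}_0$ together with the mixed magnetic terms $\{\mathcal X^{(0)}_u,\mathcal X^{(1)}_v\}_B+\{\mathcal X^{(1)}_u,\mathcal X^{(0)}_v\}_B$; and at order $\mu^3$ the quadratic-in-curvature term $\{\mathcal X^{(1)}_u,\mathcal X^{(1)}_v\}_B$ must be absorbed by $\{\mathcal X^{(1)}_u,\mathcal X^{(2)}_v\}_0+\{\mathcal X^{(2)}_u,\mathcal X^{(1)}_v\}_0$. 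I expect the key identity $\tfrac12[x,[x,u]]=(\tr x)\,xu-\tr(xu)\,x$ of Eq. \eqref{keyID}, applied repeatedly to reduce nested commutators against $x$ to the rank-one normal form, to be precisely what makes these last two cancellations work; grinding every trace down to that normal form is the technical heart of the proof, and is presumably why the authors devote a separate section to it.
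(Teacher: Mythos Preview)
Your strategy --- decompose the bracket as $\{\,,\,\}_\mu=\{\,,\,\}_0+\mu\{\,,\,\}_B$, expand each generator in powers of $\mu$, invoke Theorem~\ref{minor} for the $\mu^0$ coefficient, and then verify the remaining coefficients using the rank-one identities of Lemma~\ref{LemmaKey} --- is exactly the paper's. Your bookkeeping of which orders survive on $\pi$-degree grounds is also correct.

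Where the paper does more than you describe is in two organizational reductions that drastically shorten the harder checks. First, for $\{\mathcal S_{uv},\mathcal S_{zw}\}$ the paper does not attack the four-parameter $\mu^1$ identity head-on: it sets $\mathcal L_u:=\mathcal S_{ue}$, $\mathcal L_{u,v}:=\tfrac12(\mathcal S_{uv}-\mathcal S_{vu})$, proves the two two-parameter relations $\{\mathcal L_u,\mathcal L_v\}=\mathcal L_{u,v}$ and $\{\mathcal L_{u,v},\mathcal L_z\}=\mathcal L_{L_{u,v}z}$ (this is where \eqref{keyID} and $L_{u,v}x=\tfrac14[[u,v],x]$ are actually used), and then assembles the full structure-algebra relation formally from these together with a short derivation identity $[x,(uv)z]=[x,z](uv)+[x,uv]z$. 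Second --- and this is the bigger saving --- the paper uses the Jacobi identity with the relations already established to reduce $\{\mathcal S_{uv},\mathcal X_z\}=\mathcal X_{\{uvz\}}$ to the special case $\{\mathcal L_u,\mathcal X_v\}=\mathcal X_{uv}$, and to reduce $\{\mathcal X_u,\mathcal X_v\}=0$ to $\{\mathcal X_u,\mathcal X_e\}=0$. Since $[e,\pi]=0$ kills $\mathcal X_e^{(1)}$, the latter reduction removes one free variable \emph{and} one $\mu$-homogeneous piece from the worst computation; the $\mu^3$ check then collapses to $\tr[x,v]=0$, and only the $\mu^1$ and $\mu^2$ identities require real work. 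Your plan to verify $\{\mathcal X_u,\mathcal X_v\}=0$ directly at orders $\mu^1,\mu^2,\mu^3$ for general $u,v$ is sound in principle but noticeably heavier; the Jacobi reductions are what keep the paper's Section~\ref{S:proof} to a manageable length.
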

The proof of this theorem is a bit complicated, so we leave it to the next section.
\begin{rmk} In view of Ref. \cite{meng2014'}, Theorem \ref{main} implies that the corresponding $\mathrm{U}(1)$ Kepler problem is the Hamiltonian system with phase space $T\mathcal C_1$, Hamiltonian 
 \[
 H={1\over 2} {\mathcal X_e\over \mathcal Y_e} - {1\over \mathcal Y_e} \]
 and Laplace-Runge-Lenz vector
 \[
 \mathcal A_u={1\over 2}\left(\mathcal X_u - \mathcal Y_u {\mathcal X_e\over \mathcal Y_e}\right)+{\mathcal Y_u\over \mathcal Y_e}.
 \]
A simple computation yields $H=\frac{\langle x|\pi^2\rangle}{2r} +{n^2\mu^2\over 2(\tr x)^2} -\frac{n}{\tr x}$, i.e., the Hamiltonian in Definition 1.1 of Ref. \cite{meng2014}.  

\end{rmk}

\section{Proof of Theorem \ref{main}}\label{S:proof}
The proof is heavily dependent on Lemma \ref{LemmaKey}. Theorem \ref{minor} says that these identities hold for the constant terms in $\mu$, so we just need to verify them at higher order terms in $\mu$.
\vskip 5pt
\noindent
\underline{Step zero}: It is clear that $\{\mathcal Y_u, \mathcal Y_v\}=0$.
\vskip 5pt
\noindent
\underline{Step one}: Verify that $\{\mathcal S_{uv}, \mathcal Y_z\} = -\mathcal Y_{\{vuz\}}$. This is easy:
\begin{eqnarray}
\{\mathcal S_{uv}, \mathcal Y_z\}  &=& \{\langle S_{uv}(x)| \pi \rangle -\mu \mathrm{i} {\tr(x[u, v])\over 2\, \tr x}, \langle z|x\rangle\}\cr
&=&  \{\langle S_{uv}(x)| \pi \rangle, \langle z|x\rangle\}\cr
&=& - \mathcal Y_{\{vuz\}}\quad \mbox{no higher order terms in $\mu$ involved here}.\nonumber
\end{eqnarray}
\vskip 5pt
\noindent
\underline{Step two}: Verify that $\{\mathcal X_u,
\mathcal Y_v\} = -2\mathcal S_{uv}$. 

\begin{eqnarray}
\{\mathcal X_u, \mathcal Y_v\}  &=& \{ \langle x|\{\pi u\pi\}\rangle +{n\mu^2\over (\tr x)^2}\tr(xu)-\mu \mathrm{i} {\tr(x[u, \pi ])\over \tr x}, \langle v|x\rangle\}\cr
&=& -2\langle S_{uv}(x)|\pi\rangle - {\mu \mathrm{i}\over \tr x}  \{ \tr(x[u, \pi ]), \langle v|x\rangle\}\cr
&=& -2\langle S_{uv}(x)|\pi\rangle + {\mu \over \tr x}  \{\langle v|x\rangle, \tr(\mathrm{i}[x,u] \pi )\}\cr
&=& -2\langle S_{uv}(x)|\pi\rangle + {\mu \over \tr x} \tr(\mathrm{i}[x,u] v)\quad\mbox{using Lemma \ref{LemmaKey}} \cr
&=& -2\langle S_{uv}(x)| \pi \rangle +\mu \mathrm{i} {\tr(x[u, v])\over \tr x}\cr
&=& -2 \mathcal S_{uv}.\nonumber
\end{eqnarray}
\vskip 5pt
\noindent
\underline{Step three}: Verify that $\{\mathcal S_{uv}, \mathcal S_{zw}\} = \mathcal S_{\{uvz\}w}-\mathcal S_{z\{vuw\}}$. This is a bit involved. 

Let $\mathcal L_u:=\mathcal S_{ue}$ and $\mathcal L_{u, v}: ={1\over 2}(\mathcal S_{uv}- \mathcal S_{vu})$. Then 
\[
\mathcal L_u=\langle ux|\pi\rangle, \quad \mathcal L_{u, v}=\langle L_{u, v}x | \pi\rangle- \mu \mathrm{i} {\tr(x[u, v])\over 2\, \tr x}, \quad 
\mathcal S_{uv}=\mathcal L_{u, v}+\mathcal L_{uv}.
\]
We claim that\begin{eqnarray}
\{\mathcal L_u, \mathcal L_v \} = \mathcal L_{u, v}, \quad \{\mathcal L_{u, v}, \mathcal L_z \} = \mathcal L_{L_{u, v}z}.
\end{eqnarray}
\underline{Proof that $\{\mathcal L_u, \mathcal L_v\} = \mathcal L_{u, v}$}\;:
\begin{eqnarray}
\{\mathcal L_u, \mathcal L_v\} &=&\{\langle ux | \pi\rangle, \langle vx | \pi\rangle  \}\cr
& =& \langle L_{u, v}x | \pi\rangle +\{\langle ux | \contraction{}{\pi}{\rangle,  \langle vx |}{\pi}\pi\rangle,  \langle vx | \pi \rangle  \}\cr
&=&  \langle L_{u, v}x | \pi\rangle  -2\mu \mathrm{i}{\tr(x[ux, vx])\over (\tr x)^3}\cr
&=&   \langle L_{u, v}x | \pi\rangle  - \mu \mathrm{i}{\tr(x[u, v])\over 2\tr x}\cr
&=& \mathcal L_{u, v}.\nonumber
\end{eqnarray}
\underline{Proof that $\{\mathcal L_{u,v}, \mathcal L_z\} = \mathcal L_{L_{u, v}z}$}\; :
\begin{eqnarray}
\{\mathcal L_{u,v}, \mathcal L_z\} &=&\{\langle L_{u, v}x | \pi\rangle  - \mu \mathrm{i}{\tr(x[u, v])\over 2\tr x}, \langle zx | \pi\rangle  \}\cr
&=& \mathcal L_{L_{u, v}z}  + \mu \mathrm{i}\{\langle zx | \pi\rangle, {\tr(x[u, v])\over 2\tr x}\} + \{\langle L_{u, v}x | \contraction{}{\pi}{\rangle,  \langle zx | }{\pi} \pi\rangle,  \langle zx | \pi\rangle\} \cr
&=& \mathcal L_{L_{u, v}z}  + \mu \mathrm{i}\{\langle zx | \pi\rangle, {\tr(x[u, v])\over 2\tr x}\}  -2\mu \mathrm{i}{\tr(x[L_{u, v}x, zx])\over (\tr x)^3} \cr
&=& \mathcal L_{L_{u, v}z}  + \mu \mathrm{i}\left(-{\tr((zx)[u, v])\over 2\tr x} +{\tr(x[u, v])\over 2(\tr x)^2}\tr(zx) \right) \cr
&& -\mu \mathrm{i}{\tr(x[L_{u, v}x, z])\over (\tr x)^2} \cr
&=& \mathcal L_{L_{u, v}z}\nonumber
\end{eqnarray}
provided that
\[
-\tr((zx)[u, v])\tr x + \tr(x[u, v])\tr(zx) 
 -2\tr(x[L_{u, v}x, z])=0,
 \]
 or 
\[
-(\tr x) x[u, v] + \tr(x[u, v])x =2[x, L_{u, v}x],
\]
which is implied by the following identities 
\[ L_{u, v}x={1\over 4}[[u,v],x], \quad {1\over 2}[x, [x, u]]  = (\tr x)xu -\tr(xu)x 
\] in part (ii) of Lemma \ref{LemmaKey}.

We are now ready to prove that \underline{$\{\mathcal S_{uv}, \mathcal S_{zw}\} = \mathcal S_{\{uvz\}w}-\mathcal S_{z\{vuw\}}$}: Since $\mathcal S_{uv}=\mathcal L_{u, v}+\mathcal L_{uv}$ and $
\mathcal S_{zw} = \mathcal L_{z, w}+\mathcal L_{zw}$, we have
\begin{eqnarray}
\{\mathcal S_{uv}, \mathcal S_{zw}\} &=&  \{\mathcal L_{u, v}, \mathcal L_{z, w}\}+ \{\mathcal L_{u, v}, \mathcal L_{zw}\}+ \{\mathcal L_{uv}, \mathcal L_{z, w}\}+ \{\mathcal L_{uv},\mathcal L_{zw}\}\cr\cr
&=&  \{\mathcal L_{u, v}, \{\mathcal L_z, \mathcal L_w\}\}+\mathcal L_{L_{u,v}(zw)}-\mathcal L_{L_{z,w}(uv)}+ \mathcal L_{uv,zw}\cr\cr
&=&  \{\{\mathcal L_{u, v}, \mathcal L_z\}, \mathcal L_w\}+\{\mathcal L_z, \{\mathcal L_{u, v}, \mathcal L_w\}\}+\mathcal L_{L_{u,v}(zw)}-\mathcal L_{L_{z,w}(uv)}+ \mathcal L_{uv,zw}\cr\cr
&=&  \{\mathcal L_{L_{u, v}z}, \mathcal L_w\}+\{\mathcal L_z, \mathcal L_{L_{u, v}w}\}+\mathcal L_{L_{u,v}(zw)}-\mathcal L_{L_{z,w}(uv)}+ \mathcal L_{uv,zw}\cr\cr
&=&  \mathcal L_{L_{u, v}z, w} + \mathcal L_{z, L_{u, v}w}+\mathcal L_{L_{u,v}(zw)}-\mathcal L_{L_{z,w}(uv)}+ \mathcal L_{uv,zw}\cr\cr
&=& \mathcal L_{\{uvz\}, w} -  \mathcal L_{(uv)z, w} - \mathcal L_{z, \{vuw\}}+ \mathcal L_{z, (uv)w}\cr
&& +\mathcal L_{L_{u,v}(zw)}-\mathcal L_{L_{z,w}(uv)}+ \mathcal L_{uv,zw} \cr\cr
&=& \mathcal S_{\{uvz\}w}-\mathcal S_{z \{vuw\}} -  \mathcal L_{(uv)z, w} + \mathcal L_{z, (uv)w}+ \mathcal L_{uv,zw}\cr
&&  -\mathcal L_{\{uvz\}w} + \mathcal L_{z \{vuw\}}+\mathcal L_{L_{u,v}(zw)}-\mathcal L_{L_{z,w}(uv)} \cr\cr
&=& \mathcal S_{\{uvz\}w}-\mathcal S_{z \{vuw\}} \nonumber
\end{eqnarray} 
provided that
\[
 \mu \mathrm{i} {\tr(x[(uv)z, w])\over 2\, \tr x}- \mu \mathrm{i} {\tr(x[z, (uv)w])\over 2\, \tr x}- \mu \mathrm{i} {\tr(x[uv, zw])\over 2\, \tr x} = 0
\]
or
\[
\tr(x[(uv)z, w]) - \tr(x[z, (uv)w]) -  \tr(x[uv, zw]) = 0
\]
or
\[
[x,(uv)z] = [x,z](uv) + [x, uv]z
\] which is clearly true because $[x, ]$ is a derivation.
\vskip 5pt
\noindent
\underline{Step four}: Verify that $\{\mathcal S_{uv}, \mathcal X_z\}=\mathcal X_{\{uvz\}}$. It suffices to verify that $\{\mathcal L_u, \mathcal X_v\}=\mathcal X_{uv}$:
\begin{eqnarray}
\{\mathcal S_{uv}, \mathcal X_z\} &=& \{\mathcal L_{u,v}, \mathcal X_z\}+\{\mathcal L_{uv}, \mathcal X_z\}\cr
&=& \{\{\mathcal L_u,\mathcal L_v\}, \mathcal X_z\}+\{\mathcal L_{uv}, \mathcal X_z\}\cr
&=& \{\{\mathcal L_u,\mathcal X_z\}, \mathcal L_v\} + \{\mathcal L_u,\{\mathcal L_v, \mathcal X_z\}\}+\mathcal X_{(uv)z}\cr
&=&- \mathcal X_{v(uz)} + \mathcal X_{u(vz)} +\mathcal X_{(uv)z}\cr
&=& \mathcal X_{\{uvz\}}.\nonumber
\end{eqnarray}
\underline{Proof that $\{\mathcal L_u, \mathcal X_v\}=\mathcal X_{uv}$}, i.e.,
\[
\left\{\langle ux|\pi\rangle,  \langle x|\{\pi v\pi\}\rangle +{n\mu^2\over (\tr x)^2}\tr(xv)-\mu \mathrm{i} {\tr(x[v, \pi ])\over \tr x}\right\}
\] is equal to 
\[
 \langle x|\{\pi (uv)\pi\}\rangle +{n\mu^2\over (\tr x)^2}\tr(x(uv))-\mu \mathrm{i} {\tr(x[uv, \pi ])\over \tr x}
\] which involve terms up to degree two in $\mu$. Note that there is no need to verify it for terms constant in $\mu$ because of Theorem \ref{minor}. 

\underline{For terms quadratic in $\mu$}, we have to verify that
\[
\left\{\langle ux|\pi\rangle,  {n\mu^2\over (\tr x)^2}\tr(xv)\right\}  -{\mu \mathrm{i} \over \tr x}\left\{\langle ux|\contraction{}{\pi}{\rangle,  \tr(x[v, }{\pi}\pi\rangle,  \tr(x[v, \pi ]\right\} = {n\mu^2\over (\tr x)^2}\tr(x(uv)),
\]
i.e.,
\[
-{n\mu^2\over (\tr x)^2}\tr((ux)v) +2 {n\mu^2\over (\tr x)^3}\tr(xv)\tr(ux)  -{n\mu \mathrm{i} \over \tr x}\left\{\langle ux|\contraction{}{\pi}{\rangle,  \langle[x, v]|}{\pi}\pi\rangle,  \langle[x, v]|\pi\rangle)\right\} = {n\mu^2\over (\tr x)^2}\tr(x(uv))
\]
or
\[
-{n\mu^2\over (\tr x)^2}\tr((ux)v) +2 {n\mu^2\over (\tr x)^3}\tr(xv)\tr(ux)  - {2n\mu^2\over \tr x}{\tr(x[ux, [x, v]])\over (\tr x)^3} = {n\mu^2\over (\tr x)^2}\tr(x(uv))
\]
or
\[
-{n\mu^2\over (\tr x)^2}\tr((ux)v) +2 {n\mu^2\over (\tr x)^3}\tr(xv)\tr(ux)  - {n\mu^2\over (\tr x)^3}\tr(x[u, [x, v]]) = {n\mu^2\over (\tr x)^2}\tr(x(uv))
\]
or
\begin{eqnarray}
-\tr(v(ux))+{2\over \tr x}\tr(vx)\tr(ux)
 -{\tr(x[u, [x, v]])\over \tr x}
=\tr(x(uv))
\end{eqnarray}
or
\begin{eqnarray}
-vx+{2\over \tr x}\tr(vx)x
 -{1\over \tr x}[[x,v],x]
=xv
\end{eqnarray}which is essentially identity \eqref{keyID}. 

\underline{For terms linear in $\mu$}, we have to verify that
\[
2\left\{\langle ux|\contraction{}{\pi}{\rangle,  \langle x|\{}{\pi}\pi\rangle,  \langle x|\{\pi v\pi\}\rangle \right\}  -\mu \mathrm{i}\left\{\langle ux|\pi\rangle, {\tr(x[v, \pi ])\over \tr x}\right\}\suchthat{1\over 2}_{\mbox{no $\pi\pi$ contraction}} =  -\mu \mathrm{i} {\tr(x[uv, \pi ])\over \tr x},
\]
i.e.
\begin{eqnarray}
- \mu \mathrm{i} {\tr(x[(uv), \pi ])\over \tr x} &=&  -4\mu \mathrm{i} {\tr(x[ux, S_{v\pi} (x)])\over (\tr x)^3}-\mu \mathrm{i}{\tr((u\pi)[x, v])\over \tr x}\cr
&&+\mu \mathrm{i} {\tr((ux)[v, \pi ])\over \tr x} - \mu \mathrm{i} {\tr(x[v, \pi ])\over (\tr x)^2}\tr(ux)\nonumber
\end{eqnarray}
or
\begin{eqnarray}
- \mu \mathrm{i} {\tr(x[(uv), \pi ])\over \tr x} &=&  -2\mu \mathrm{i} {\tr(x[u, S_{v\pi} (x)])\over (\tr x)^2}-\mu \mathrm{i}{\tr((u\pi)[x, v])\over \tr x}\cr
&&+\mu \mathrm{i} {\tr((ux)[v, \pi ])\over \tr x} - \mu \mathrm{i} {\tr(x[v, \pi ])\over (\tr x)^2}\tr(ux)\nonumber
\end{eqnarray}
or
\begin{eqnarray}
2\tr(x[u, S_{v\pi} (x)]) &=& - \tr((u\pi)[x, v]) \tr x+\tr((ux)[v, \pi ]) \tr x \cr
&&-  \tr(x[v, \pi ]) \tr(ux) + \tr(x[uv, \pi ]) \tr x\nonumber
\end{eqnarray}
or
\begin{eqnarray}
2[S_{v\pi} (x), x] &=& - \pi[x, v] \tr x+x[v, \pi ] \tr x -  \tr(x[v, \pi ]) x + v[\pi, x ] \tr x.\nonumber
\end{eqnarray}
Expanding the term on the left and combining the 1st and last terms on the right, we arrive at the identity
\begin{eqnarray}
2[v(\pi x), x] +2[(v\pi )x, x]-2[\pi(vx), x] = [\pi v, x]\tr x+x[v, \pi]\tr x-\tr(x[v, \pi])x.\nonumber
\end{eqnarray}
Since $2[(v\pi )x, x]=[v\pi, x]\tr x$, the preceding identity becomes
\begin{eqnarray}
2[L_{v, \pi} x, x]  =x[v, \pi]\tr x-\tr(x[v, \pi])x\nonumber
\end{eqnarray}
or 
\begin{eqnarray}
{1\over 2}[x, [x, [v, \pi]]]  = x[v, \pi]\tr x-\tr(x[v, \pi])x\nonumber
\end{eqnarray} which is implied by the following identities 
\[ L_{u, v}x={1\over 4}[[u,v],x], \quad {1\over 2}[x, [x, u]]  = (\tr x)xu -\tr(xu)x 
\] in part (ii) of Lemma \ref{LemmaKey}.

\vskip 5pt
\noindent
\underline{Step five}: Verify that  $\{\mathcal X_u, \mathcal X_v\} =0$. It suffices to verify that
$\{\mathcal X_u, \mathcal X_e\} =0$:
\begin{eqnarray}
\{\mathcal X_u, \mathcal X_v\} &=&\{\mathcal X_u, \{\mathcal L_v, \mathcal X_e\}\}\cr
&=&\{\{\mathcal X_u,\mathcal L_v\}, \mathcal X_e\} + \{\mathcal L_v, \{\mathcal X_u, \mathcal X_e\}\}\cr
&=& -\{\mathcal X_{uv}, \mathcal X_e\} =0.\nonumber
\end{eqnarray}

\noindent
\underline{Proof that $\{\mathcal X_u, \mathcal X_e\} =0$}, i.e., 
\[
 \left\{ \langle x|\{\pi v\pi\}\rangle +{n\mu^2\over (\tr x)^2}\tr(xv)-\mu \mathrm{i} {\tr(x[v, \pi ])\over \tr x},  \langle x|\pi^2 \rangle +{n\mu^2\over \tr x} \right\} = 0.
\]
This identity has terms up to degree four in $\mu$. Again there is no need to verify the degree zero terms in $\mu$. 

\underline{For terms linear in $\mu$}, we have to verify that 
\begin{eqnarray}
0 &=& 4\{ \langle S_{v\pi}x | \contraction{}{\pi}{\rangle,  \langle \pi x|}{\pi} \pi\rangle,  \langle \pi x|\pi \rangle\} \cr
&&-{\mu \mathrm{i} \over \tr x} \{\tr([x, v] \contraction{}{\pi}{ ),  \langle \pi^2| }{x }\pi ),  \langle \pi^2| x \rangle \} -{2\mu \mathrm{i} \over \tr x}\{\tr([\contraction{}{x}{,v] \pi ),  \langle \pi x|}{\pi}x,v] \pi ),  \langle \pi x|\pi \rangle \}\cr
 && +2\mu \mathrm{i}  {\tr([x,v] \pi )\over (\tr x)^2}\{\tr \contraction{}{x}{,  \langle \pi x|}{\pi }x,  \langle \pi x|\pi \rangle \}\nonumber
\end{eqnarray}
i.e.,
\begin{eqnarray}
0&=& -8\mu \mathrm{i}{\tr(x[S_{v\pi}x, \pi x])\over (\tr x)^3} +{n\mu \mathrm{i} \over \tr x}  \langle \pi^2| [x, v] \rangle\cr
 && -{2\mu \mathrm{i} \over \tr x} \tr([\pi x,v] \pi ]) +2\mu \mathrm{i}  {\tr([x,v] \pi )\over (\tr x)^2} \tr (\pi x)\nonumber
\end{eqnarray}
or
\begin{eqnarray}
0&=& -4\mu \mathrm{i}{\tr(x[S_{v\pi}x, \pi])\over (\tr x)^2} +{n\mu \mathrm{i} \over \tr x}  \langle \pi^2| [x, v] \rangle\cr
 && -{2\mu \mathrm{i} \over \tr x} \tr([\pi x,v] \pi ]) +2\mu \mathrm{i}  {\tr([x,v] \pi )\over (\tr x)^2} \tr (\pi x)\nonumber
\end{eqnarray}
or
\[
 -2\tr(x[S_{v\pi}x, \pi ]) + {1\over 2}\tr x \; \tr( \pi^2 [x, v] )  -  \tr x \tr([\pi x,v] \pi ) 
+ \tr([x,v] \pi ]) \tr (\pi x) =0
\]
or
\[
 -2\tr(x[S_{x\pi}v, \pi ]) + {1\over 2}\tr x \; \tr( [\pi^2, x] v )     -  \tr x \tr([\pi x,v] \pi ) 
+ \tr([x,v] \pi ) \tr (\pi x) =0
\]

or
\[
-2S_{\pi x}([\pi, x]) + {1\over 2}\tr x  [ \pi^2, x]-\tr x [\pi, \pi x]+\tr(\pi x)[\pi, x]=0.
\]
Since $2\pi x =\tr x \pi+\tr \pi x$, we have
\[
-2S_{\pi x}([\pi, x]) + {1\over 2}\tr x  [ \pi^2, x]+{1\over 2}\tr\pi\; \tr x [\pi, x]=0.
\]
So we need to verify that

\[
S_{\pi x}([\pi, x])= {1\over 4}\tr x  [ \pi^2, x]+{1\over 4}\tr\pi\; \tr x [\pi, x]
\]
which can indeed be verified: 
\begin{eqnarray}
S_{\pi x}([\pi, x]) &= & \pi (x[\pi, x])-x(\pi[\pi, x])+(\pi x)[\pi, x]\cr
&=&  {1\over 2}\pi [\pi, x^2]-{1\over 2}x[\pi^2, x]+(\pi x)[\pi, x]\cr
&=&  {1\over 4} [\pi^2, x^2]-{1\over 4}[\pi^2, x^2]+(\pi x)[\pi, x]\cr
&= &(\pi x)[\pi, x]\cr
&=& {1\over 2}(\tr x\; \pi+\tr \pi\; x) [\pi, x]\quad\mbox{$\because \pi$ is a tangent vector}\cr
&=& {1\over 2}\tr x\; \pi [\pi, x]+{1\over 2}\tr \pi\; x [\pi, x]\cr
&=& {1\over 4}\tr x\; [\pi^2, x]+{1\over 4}\tr \pi\; [\pi, x^2]\cr
&=& {1\over 4}\tr x\; [\pi^2, x]+{1\over 4}\tr \pi\;\tr x [\pi, x].\nonumber
\end{eqnarray}

\vskip 5pt
\underline{For terms quadratic in $\mu$}, we have to verify that
\begin{eqnarray}
0 &=&  2\{ \langle S_{v\pi}x|  \contraction{}{\pi}{\rangle, -\tr}{x}\pi\rangle, -\tr x\} { n\mu^2\over (\tr x)^2}\cr
&& +2 \left\{ {n\mu^2\over (\tr x)^2}\tr(\contraction{}{x}{v),  \langle \pi x|}{\pi}xv),  \langle \pi x|\pi \rangle \right\}-4  {n\mu^2\over (\tr x)^3}\tr(xv)\left\{\tr \contraction{}{x}{,  \langle \pi x|}{\pi}x,  \langle \pi x|\pi \rangle \right\}\cr
 && - {2\mu \mathrm{i} \over \tr x}\{\tr([x,v] \contraction{}{\pi}{ ]),  \langle \pi x|}{\pi}\pi ]),  \langle \pi x|\pi \rangle \},\nonumber
\end{eqnarray}
i.e.,
\begin{eqnarray}
0 &=& 2 \tr (S_{v\pi}x){ n\mu^2\over (\tr x)^2}\cr
&& +2 {n\mu^2\over (\tr x)^2}\tr((\pi x)v) -4  {n\mu^2\over (\tr x)^3}\tr(xv) \tr (\pi x)\cr
 &&  - {4n\mu^2 \over \tr x}{\tr(x[[x, v], \pi x])\over (\tr x)^3}\nonumber
\end{eqnarray}
or
\begin{eqnarray}
0 &=& 2 \tr (S_{v\pi}x){ n\mu^2\over (\tr x)^2}\cr
&& +2 {n\mu^2\over (\tr x)^2}\tr((\pi x)v) -4  {n\mu^2\over (\tr x)^3}\tr(xv) \tr (\pi x) \cr
 &&  - {2n\mu^2 \over \tr x}{\tr(x[[x, v], \pi])\over (\tr x)^2} \nonumber
\end{eqnarray}
or

\[
   \tr (S_{v\pi}x)\tr x
 + \tr x \tr((\pi x)v) -2 \tr(xv) \tr (\pi x)
 -\tr(x[[x, v], \pi])=0
 \]
or

\[
   \tr ((v\pi)x)\tr x
 + \tr x \tr((\pi x)v) = 2 \tr(xv) \tr (\pi x) +\tr(x[[x, v], \pi])
 \]
or 
\[
  2 \tr x \tr((\pi x)v) = 2 \tr(xv) \tr (\pi x) +\tr([x, [x, v]] \pi).
 \]
Since ${1\over 2}[x,[x,v]]= \tr x (xv)-\tr(xv)x$, the preceding identity becomes

\[
 2\tr x \tr((\pi x)v) = 2 \tr(xv) \tr (\pi x) +2\tr x \tr ((xv)\pi)-2\tr(xv)\tr (\pi x)
 \] which is trivially true.

\vskip 5pt
\underline{For terms cubic in $\mu$}, we have to verify that
\[
0= \left\{-\mu \mathrm{i} {\tr(x[v, \pi ])\over \tr x},  {n\mu^2\over \tr x} \right\} \quad\mbox{or} \quad
\left\{\tr([x, v] \pi ), \tr x \right\}=0
\]
or \[
0= \tr([x,v])
\] which is trivially true.

There are no terms higher than cubic.

\section{Quadratic Relations}
The main purpose of this section is to show that, in the Poisson realization for the conformal algebra of ${\mathrm H}_n(\mathbb C)$ that we have proved in the preceding section, the generators of the conformal 
algebra 
\[
\mathcal S_{u, v}, \quad \mathcal X_z, \quad \mathcal Y_w
\]
satisfy some quadratic relations. Moreover, these quadratic relation is the consequence of a single one which shall be called the primary quadratic relation. As a consequence, for the corresponding $\mathrm{U}(1)$ Kepler problem, we obtain a formula connecting the Hamiltonian to the angular momentum and the Laplace-Runge-Lenz vector. This formula generalizes the one given by Eq. (2.8) of Ref. \cite{meng2012}.
 
\begin{Thm}\label{QRelations}
Let $e_\alpha$ be an orthonormal basis for ${\mathrm H}_n(\mathbb C)$. In the following we hide the summation sign over $\alpha$ or $\beta$. For the Poisson realization given by Eq. \eqref{PoissonRMain},
we have the following
\begin{enumerate}[(i)]
\item primary quadratic relation
\begin{eqnarray}\label{case1}
{2\over n} \mathcal L_{e_\alpha}^2-\mathcal  L_e^2- \mathcal  X_e  \mathcal Y_e = -\mu^2
\end{eqnarray}
\\
\noindent and secondary quadratic relations
\item $\mathcal  X_{e_\alpha} \mathcal  L_{e_\alpha}=n\mathcal  X_e \mathcal  L_e$, 
$\mathcal  Y_{e_\alpha} \mathcal  L_{e_\alpha} = n\mathcal  Y_e \mathcal  L_e$,
 \\
 \item ${4\over n}\mathcal  L_{e_\alpha, u} \mathcal  L_{e_\alpha}=-\mathcal  X_u\mathcal  Y_e+\mathcal  X_e \mathcal  Y_u$,
\\ 
\item 
$\mathcal  X_{e_\alpha}^2 =n\mathcal  X_e^2$, $\mathcal  Y_{e_\alpha}^2 =n\mathcal  Y_e^2$,
\\
\item ${2\over n} \mathcal  L_{e_\alpha, u} \mathcal  X_{e_\alpha}=-\mathcal  X_u \mathcal  L_e+\mathcal  L_u \mathcal  X_e$, ${2\over n}\mathcal  L_{e_\alpha,u} \mathcal  Y_{e_\alpha}=\mathcal  Y_u \mathcal  L_e-\mathcal  L_u\mathcal  Y_e$,
\\
\item $\mathcal  X_{e_\alpha} \mathcal  Y_{e_\alpha}= n (\mathcal  L_e^2 + \mu^2), 
$
\\
\item $\frac{4}{n^3}\mathcal  L_{e_\alpha, e_\beta}^2=\mathcal  X_e \mathcal  Y_e-\mathcal  L_e^2+\frac{n-2}{n}\mu^2$.
\end{enumerate}

\end{Thm}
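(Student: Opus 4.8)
My plan is to regard (i) as the only relation needing a genuine computation and to read off the remaining ones by the same device. The workhorse is the completeness relation $\sum_\alpha\langle e_\alpha\mid a\rangle e_\alpha=a$ combined with the linearity of $\mathcal X,\mathcal Y,\mathcal S,\mathcal L$ in their vector arguments: whenever one factor in a single-index contraction $\sum_\alpha G_{e_\alpha}G'_{e_\alpha}$ can be written as $\langle e_\alpha\mid a\rangle$ for a (possibly phase-space dependent) element $a$, the sum collapses to the single generator $G'_{a}$, which is then evaluated by the rank-one identities of Lemma \ref{LemmaKey}, namely $x^2=(\tr x)x$, $x\cdot u\cdot x=\tr(xu)x$ and $L_{u,v}x=\tfrac14[[u,v],x]$. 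Conceptually this is why the secondary relations are consequences of the primary one: they all live in the single $\mathfrak{su}(n,n)$-module generated by (i), and applying the Hamiltonian derivations $\{\,\cdot\,,\mathcal X_w\}$, $\{\,\cdot\,,\mathcal Y_w\}$, $\{\,\cdot\,,\mathcal L_u\}$ to the constant (i) transports it onto the others through the bracket relations of Theorem \ref{main}.

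For (i) itself I would use $\mathcal L_{e_\alpha}=\langle e_\alpha x\mid\pi\rangle=\langle e_\alpha\mid x\pi\rangle$ (self-adjointness of $L_x$, Jordan product $x\pi$), so that the collapse gives $\sum_\alpha\mathcal L_{e_\alpha}^2=\mathcal L_{x\pi}=\langle x\pi\mid x\pi\rangle$. Since $\mathcal L_e=\langle x\mid\pi\rangle$, $\mathcal Y_e=\tfrac{1}{n}\tr x$ and $\mathcal X_e=\langle x\mid\pi^2\rangle+\tfrac{n\mu^2}{\tr x}$ (the $\mu$-linear term of $\mathcal X_e$ drops because $[e,\pi]=0$), the left-hand side of (i) becomes a combination of traces in $x$ and $\pi$ whose $\mu$-free part vanishes by the single rank-one identity $2\,\tr\!\big((x\pi)(x\pi)\big)=(\tr(x\pi))^2+(\tr x)\tr(x\pi^2)$, leaving only the $+\mu^2$ carried by $\mathcal X_e\mathcal Y_e$; hence the value $-\mu^2$.

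The single-index relations (ii)--(vii) then reduce uniformly. For example $\sum_\alpha\mathcal Y_{e_\alpha}e_\alpha=x$ collapses (vi) to $\mathcal X_x$, whose $\mu$-linear part vanishes ($\tr(x[x,\pi])=0$) and whose quadratic part is $n\mathcal L_e^2$ by the identity above, giving $n(\mathcal L_e^2+\mu^2)$; and $\sum_\alpha\mathcal Y_{e_\alpha}\mathcal L_{e_\alpha,u}=\mathcal L_{x,u}$ together with $L_{u,v}x=\tfrac14[[u,v],x]$ and $\tfrac12[x,[x,u]]=(\tr x)xu-\tr(xu)x$ yields the second identity in (v), and likewise for its companions. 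The $\mathcal Y$-halves of (ii) and (iv) are immediate Parseval identities; the remaining halves collapse to a single $\mathcal X$ or $\mathcal L$ evaluated at $x$, at the Jordan element $x\pi$, or at the Riesz representative $\Phi=\sum_\beta\mathcal X_{e_\beta}e_\beta$ of $u\mapsto\mathcal X_u$, and are closed out by the same rank-one computation (for the $\mathcal X$-square in (iv) this amounts to checking $\tr(\Phi^2)=(\tr\Phi)^2$, i.e. that $\Phi$ is rank one).

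The genuinely harder statement is (viii), the only \emph{double} contraction, which does not collapse to one generator. Writing $\mathcal L_{e_\alpha,e_\beta}=\tfrac14\langle[[e_\alpha,e_\beta],x]\mid\pi\rangle-\mu\mathrm i\,\tfrac{\tr(x[e_\alpha,e_\beta])}{2\tr x}$ and using $\langle[[e_\alpha,e_\beta],x]\mid\pi\rangle=\tfrac1n\tr([e_\alpha,e_\beta][x,\pi])$, squaring and summing forces the adjoint Casimir
\[
\sum_{\alpha,\beta}\tr([e_\alpha,e_\beta]M)\,\tr([e_\alpha,e_\beta]N)=2n^2(\tr M)(\tr N)-2n^3\,\tr(MN),
\]
which I would derive from the $\mathrm U(n)$ completeness identities $\sum_\alpha e_\alpha\cdot X\cdot e_\alpha=n(\tr X)e$ and $\sum_\alpha e_\alpha^2=n^2 e$. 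The $\mu$-linear part then vanishes since $\tr([x,\pi])=\tr([x,\pi]x)=0$, the $\mu$-free part equals $-\tfrac1{2n^2}\tr([x,\pi]^2)$ and reproduces $\mathcal X_e\mathcal Y_e-\mathcal L_e^2$ through $\tr([x,\pi]^2)=2(\tr(x\pi))^2-2(\tr x)\tr(x\pi^2)$, and the $\mu^2$ part comes out as $\tfrac{2(n-1)}{n}\mu^2$. The subtle point — and where I expect the real bookkeeping — is that the advertised coefficient $\tfrac{n-2}{n}\mu^2$ only appears after the hidden $+\mu^2$ inside $\mathcal X_e\mathcal Y_e$ is moved across, so the dimension dependence is split between the Casimir and that term; one must also keep the anti-Hermitian commutators and their traces straight. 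Finally, rearranging (viii) against (vi) expresses $\mathcal X_e\mathcal Y_e$, hence the Hamiltonian $H=\tfrac12\mathcal X_e/\mathcal Y_e-1/\mathcal Y_e$, in terms of the total angular momentum $\sum_{\alpha,\beta}\mathcal L_{e_\alpha,e_\beta}^2$ and the Laplace--Runge--Lenz data, which is the promised corollary.
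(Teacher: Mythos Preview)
Your treatment of (i) is exactly the paper's: collapse $\sum_\alpha\mathcal L_{e_\alpha}^2$ to $\langle x\pi\mid x\pi\rangle$ via completeness, then invoke the rank-one trace identities. For the secondary relations, however, you take a genuinely different route. The paper never computes (ii)--(vii) directly; it simply applies the Poisson bracket $\{\,\cdot\,,\mathcal X_e\}$, $\{\,\cdot\,,\mathcal Y_e\}$, or $\{\,\cdot\,,\mathcal L_u\}$ to the already-established relation (i), and then iterates --- e.g.\ (ii) comes from bracketing (i) with $\mathcal X_e$ and $\mathcal Y_e$, (iv) from bracketing (ii), and (vii) from bracketing the first identity of (v) with $\mathcal Y_u$, setting $u=e_\beta$, summing, and reducing with (i) and (vi). You mention this derivation mechanism as a conceptual aside in your first paragraph, but your actual argument is the direct collapse-and-evaluate method throughout.

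Your route is correct and in a sense more elementary: it never appeals to Theorem~\ref{main}, only to the rank-one identities of Lemma~\ref{LemmaKey} and to completeness. The cost is that each relation becomes a separate computation --- in particular your proof of the $\mathcal X$-half of (iv) requires knowing that the Riesz representative $\Phi=\sum_\beta\mathcal X_{e_\beta}e_\beta$ is rank one, which is true (writing $x=vv^*$ one finds $\Phi=ww^*$ with $w=\pi v+\tfrac{n\mu\mathrm i}{|v|^2}v$) but not entirely obvious and not ``the same rank-one computation'' you invoke elsewhere. The paper's bracket method avoids this by getting (iv) for free from (ii). Your direct attack on the double contraction (vii) via the adjoint Casimir $\sum_{\alpha,\beta}\tr([e_\alpha,e_\beta]M)\tr([e_\alpha,e_\beta]N)=2n^2\tr M\,\tr N-2n^3\tr(MN)$ is correct and arguably cleaner than the paper's derivation, which passes through (v), (vi), and the identity $\sum_\beta L_{e_\beta}^2=\tfrac{n^2}{2}(L_e+|e\rangle\langle e|)$. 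One cosmetic slip: there is no (viii); the double contraction is (vii), and your ``single-index relations (ii)--(vii)'' should read (ii)--(vi).
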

\begin{proof}
\begin{enumerate}[(i)]
\item
Since $\mathcal L_u=\langle ux|\pi\rangle$, we have
\begin{eqnarray}
\frac{2}{n} \mathcal L_{e_\alpha}^2&=& \frac{2}{n} \langle e_\alpha x|\pi\rangle^2 = \frac{2}{n} \langle e_\alpha |x\pi\rangle^2 = \frac{2}{n} \langle  \pi x |\pi x\rangle \cr
&=&  \frac{1}{2n} ||\tr x \pi +\tr \pi x||^2 \cr
&=& \frac{1}{2n} \bigl( (\tr x)^2 ||\pi||^2 +(\tr \pi)^2 ||x||^2 +2\tr x\tr \pi \langle x\mid \pi\rangle\bigr)\cr
&=& \frac{1}{2n^2} \bigl( (\tr x)^2 \tr \pi^2 +(\tr \pi)^2 \tr x^2 +2\tr x\tr \pi \tr(\pi x) \bigr)\cr
&=& \frac{(\tr x)^2}{2n^2} \bigl( \tr \pi^2 +3(\tr \pi)^2  \bigr).\nonumber
\end{eqnarray}
Since $\mathcal L_e=\langle x|\pi\rangle=\frac{1}{n}\tr\pi \tr x$,
\begin{eqnarray}
\mathcal X_e &=& \langle x\mid \pi^2\rangle+ {n\mu^2\over \tr x}=\langle \pi x\mid \pi\rangle+ {n\mu^2\over \tr x}\cr
&=& \frac{1}{2}\langle \tr \pi x+\tr x\pi \mid \pi\rangle+ {n\mu^2\over \tr x}\cr
&=& \frac{\tr x}{2n}(\tr \pi^2 +(\tr \pi)^2 )+ {n\mu^2\over \tr x}\nonumber
\end{eqnarray}
and $\mathcal Y_e =\langle e\mid x\rangle = \frac{1}{n}\tr x$, so
\[
\mathcal L_e^2+\mathcal X_e \mathcal Y_e = \frac{(\tr x)^2}{2n^2} \bigl( \tr \pi^2 +3(\tr \pi)^2  \bigr)+\mu^2.
\]
The primary quadratic relation is then clear. 

\item
The two identities can be obtained by taking the Poisson bracket of the primary quadratic relation with $\mathcal  X_e$ and $\mathcal  Y_e$ respectively. For example, since
\[
{2\over n} \sum\{\mathcal L_{e_\alpha}^2, \mathcal X_e\}  - \{\mathcal  L_e^2, \mathcal X_e\}- \{\mathcal  X_e  \mathcal Y_e, \mathcal X_e  \} =0,
\]
we have  
\[
{4\over n}  \mathcal L_{e_\alpha} \mathcal X_{e_\alpha}-2 \mathcal  L_e \mathcal X_e- \mathcal  X_e \cdot 2 \mathcal L_e =0
\]
or \( \mathcal L_{e_\alpha} \mathcal X_{e_\alpha} = n\mathcal X_e  \mathcal  L_e \).

\item The identity can be obtained by taking the Poisson bracket of the primary quadratic relation with $\mathcal  L_u$.

\item By taking the Poisson bracket of  \(\mathcal L_{e_\alpha} \mathcal X_{e_\alpha} = n\mathcal X_e  \mathcal  L_e \) with $\mathcal  X_e$, we have $ \mathcal  X_{e_\alpha}^2 = n \mathcal  X_e^2$. Similarly, we can prove $\mathcal  Y_{e_\alpha}^2 =n\mathcal  Y_e^2$. 

\item  The two identities can be obtained by taking the Poisson bracket of the identity in (iii) with $\mathcal  X_e$ and $\mathcal  Y_e$ respectively.

\item  The identity can be obtained by taking the Poisson bracket of the first identity in (ii) with $\mathcal  Y_e$ and then using the primary quadratic relation. 
 
\item Taking the Poisson bracket of the first identity in (v) with $\mathcal Y_u$ and then taking $u=e_\beta$ and summing over $\beta$, we get
\[
\frac{2}{n}\bigl(\mathcal Y_{L_{e_\alpha, e_\beta}(e_\beta)}\mathcal X_{e_\alpha} -2 \mathcal L_{e_\alpha, e_\beta}^2   \bigr) = 2\mathcal L_{e_\beta^2}\mathcal L_e+\mathcal X_{e_\beta}\mathcal Y_{e_\beta} - \mathcal Y_{e_\beta^2}\mathcal X_e-2 \mathcal L_{e_\beta}^2
\] 
or
\[
\frac{2}{n}\bigl(\mathcal Y_{e_\alpha e_\beta^2-L_{e_\beta}^2e_\alpha}\mathcal X_{e_\alpha} -2 \mathcal L_{e_\alpha, e_\beta}^2   \bigr) = 2n^2 \mathcal L_e^2+\mathcal X_{e_\beta}\mathcal Y_{e_\beta} - n^2\mathcal Y_e\mathcal X_e-2 \mathcal L_{e_\beta}^2
\] 
Since $e_\alpha^2 = n^2 e$ and $L_{e_\beta}^2=\frac{n^2}{2}(L_e+|e\rangle\langle e|)$ (see line 13 after equation (6.23) of Ref. \cite{meng2011}), we have $e_\alpha e_\beta^2-L_{e_\beta}^2e_\alpha=\frac{n^2}{2} (e_\alpha- \langle e|e_\alpha\rangle e) $, so, in the preceding equation,
\[
LHS = n\bigl(\mathcal Y_{e_\alpha} \mathcal X_{e_\alpha} -\mathcal X_e\mathcal Y_e\bigr)-\frac{4}{n}\mathcal L_{e_\alpha, e_\beta}^2.
\]
Therefore, we have
\[
-\frac{4}{n}\mathcal L_{e_\alpha, e_\beta}^2 = 2n^2 \mathcal L_e^2+(1-n)\mathcal X_{e_\alpha}\mathcal Y_{e_\alpha} +n(1- n)\mathcal Y_e\mathcal X_e-2 \mathcal L_{e_\alpha}^2.
\]
Upon using identities in (i) and (vi), the preceding identity becomes
\[
-\frac{4}{n}\mathcal L_{e_\alpha, e_\beta}^2 = 2n^2 \mathcal L_e^2+(1-n)n(\mathcal L_e^2+\mu^2) +n(1- n)\mathcal Y_e\mathcal X_e-n (\mathcal L_e^2+\mathcal X_e\mathcal Y_e-\mu^2)
\]
or
\[
\frac{4}{n^3}\mathcal L_{e_\alpha, e_\beta}^2 = -\mathcal L_e^2+\mathcal X_e\mathcal Y_e+\frac{n-2}{n}\mu^2.
\]

\end{enumerate}
\end{proof} 
\begin{rmk}
In Ref. \cite{meng2011}, the quantum version of these quadratic relations is worked out for 
un-magntized generalized Kepler problems associated with arbitrary simple Euclidean Jordan algebra.  
As far as we know, the quantum version of these quadratic relations for the MICZ-Kepler problems appeared first in Ref. \cite{BarutBohm}. The fact that these quadratic relations are consequences of a single quadratic relation was observed first in Ref. \cite{meng2011}.  Please also compare with the relevant part in Refs. \cite{mengzhang, mengLMS, mengJLT, meng2013b, BaiThesis}.
\end{rmk}

As a corollary of these quadratic relations, let us derive a formula connecting the Hamiltonian to the angular momentum and the Laplace-Runge-Lenz vector. From Ref. \cite{meng2014'} we know that the Hamiltonian is
 \[
 H={1\over 2} {\mathcal X_e\over \mathcal Y_e} - {1\over \mathcal Y_e},
 \]
the angular momentum is $\mathcal L_{e_\alpha, e_\beta}$, and Laplace-Runge-Lenz vector is
\[
\mathcal A_{e_\alpha} = {1\over 2}\left(\mathcal X_{e_\alpha} - \mathcal Y_{e_\alpha}  {\mathcal X_e\over \mathcal Y_e}\right)+{\mathcal Y_{e_\alpha} \over \mathcal Y_e}.
\]
\begin{corollary} Let $e_\alpha$ be an orthonormal basis for ${\mathrm H}_n(\mathbb C)$, $L^2={1\over 2}\sum_{\alpha, \beta} \mathcal L_{e_\alpha, e_\beta}^2$, and $A^2=-1+\sum_\alpha \mathcal A_{e_\alpha}^2$. Then the Hamiltonian $H$ satisfies the relation 
\begin{eqnarray}\label{HLA}
-2H\left (L^2- \frac{n^2(n-1)}{4}\mu^2\right ) = \left(\frac{n}{2}\right)^2(n-1-A^2).
\end{eqnarray}
\end{corollary}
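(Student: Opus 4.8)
The plan is to reduce the claimed identity \eqref{HLA} to a purely algebraic relation among the four fundamental functions $\mathcal{X}_e$, $\mathcal{Y}_e$, $\mathcal{L}_e$, and the constant $\mu$, and then verify that relation using the quadratic relations of Theorem \ref{QRelations}. The crucial opening observation is that the Laplace-Runge-Lenz components collapse: since $H=\frac{1}{2}\mathcal{X}_e/\mathcal{Y}_e-1/\mathcal{Y}_e$, the quantity $\frac{1}{\mathcal{Y}_e}-\frac{1}{2}\frac{\mathcal{X}_e}{\mathcal{Y}_e}$ is exactly $-H$, so the definition of $\mathcal{A}_{e_\alpha}$ simplifies to $\mathcal{A}_{e_\alpha}=\frac{1}{2}\mathcal{X}_{e_\alpha}-H\,\mathcal{Y}_{e_\alpha}$. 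I would also record the companion identity $H\,\mathcal{Y}_e=\frac{1}{2}\mathcal{X}_e-1$, which follows at once from the definition of $H$ and is used repeatedly below. Since all the $\mathcal{X}$'s, $\mathcal{Y}$'s, $\mathcal{L}$'s and $H$ are ordinary smooth functions on $T\mathcal{C}_1$, their products are commutative and no Poisson-bracket subtleties intervene; the argument is genuinely algebraic.

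First I would compute $A^2$. Squaring the simplified expression and summing over $\alpha$ gives $\sum_\alpha\mathcal{A}_{e_\alpha}^2=\frac{1}{4}\sum_\alpha\mathcal{X}_{e_\alpha}^2-H\sum_\alpha\mathcal{X}_{e_\alpha}\mathcal{Y}_{e_\alpha}+H^2\sum_\alpha\mathcal{Y}_{e_\alpha}^2$, and the three sums are precisely the secondary relations (iv) and (vi) of Theorem \ref{QRelations}, namely $\sum_\alpha\mathcal{X}_{e_\alpha}^2=n\mathcal{X}_e^2$, $\sum_\alpha\mathcal{Y}_{e_\alpha}^2=n\mathcal{Y}_e^2$, and $\sum_\alpha\mathcal{X}_{e_\alpha}\mathcal{Y}_{e_\alpha}=n(\mathcal{L}_e^2+\mu^2)$. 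Substituting $H^2\mathcal{Y}_e^2=(H\mathcal{Y}_e)^2=(\frac{1}{2}\mathcal{X}_e-1)^2$ then yields the closed form $\sum_\alpha\mathcal{A}_{e_\alpha}^2=\frac{n}{2}\mathcal{X}_e^2-nH(\mathcal{L}_e^2+\mu^2)-n\mathcal{X}_e+n$, whence $A^2=-1+\sum_\alpha\mathcal{A}_{e_\alpha}^2$ is known explicitly.

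In parallel I would reduce the left-hand side. The angular-momentum square $L^2=\frac{1}{2}\sum_{\alpha,\beta}\mathcal{L}_{e_\alpha,e_\beta}^2$ is handled by relation (vii), giving $L^2=\frac{n^3}{8}\bigl(\mathcal{X}_e\mathcal{Y}_e-\mathcal{L}_e^2+\frac{n-2}{n}\mu^2\bigr)$. A short bookkeeping of the $\mu^2$ terms shows $L^2-\frac{n^2(n-1)}{4}\mu^2=\frac{n^3}{8}\bigl(\mathcal{X}_e\mathcal{Y}_e-\mathcal{L}_e^2-\mu^2\bigr)$; this is the step where the specific coefficient $\frac{n^2(n-1)}{4}$ in the statement is forced, since it is exactly what collapses the two $\mu^2$ contributions into a single clean $-\mu^2$. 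Consequently the left-hand side of \eqref{HLA} becomes $-\frac{n^3}{4}H\bigl(\mathcal{X}_e\mathcal{Y}_e-\mathcal{L}_e^2-\mu^2\bigr)$.

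Finally I would match the two sides. Rewriting the right-hand side of \eqref{HLA} as $\frac{n^2}{4}(n-\sum_\alpha\mathcal{A}_{e_\alpha}^2)$ and cancelling the common factor $n^2/4$, the asserted identity reduces to $\sum_\alpha\mathcal{A}_{e_\alpha}^2=n+nH\bigl(\mathcal{X}_e\mathcal{Y}_e-\mathcal{L}_e^2-\mu^2\bigr)$. Expanding the right-hand side here through $nH\mathcal{X}_e\mathcal{Y}_e=n\mathcal{X}_e(H\mathcal{Y}_e)=n\mathcal{X}_e(\frac{1}{2}\mathcal{X}_e-1)$ reproduces exactly the closed form for $\sum_\alpha\mathcal{A}_{e_\alpha}^2$ obtained in the second step, which closes the argument. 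I do not expect a real obstacle: once Theorem \ref{QRelations} is available the proof is a direct substitution, and the only place demanding care is the cancellation of the $\mu^2$ terms, where the normalizations appearing in the statement must line up precisely.
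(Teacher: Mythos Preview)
Your proof is correct and follows essentially the same route as the paper's: both reduce $\sum_\alpha \mathcal{A}_{e_\alpha}^2$ and $L^2$ to expressions in $\mathcal{X}_e$, $\mathcal{Y}_e$, $\mathcal{L}_e$, $\mu$ via relations (iv), (vi), (vii) of Theorem~\ref{QRelations} together with $H\mathcal{Y}_e=\tfrac{1}{2}\mathcal{X}_e-1$, and then match the two sides. Your observation $\mathcal{A}_{e_\alpha}=\tfrac{1}{2}\mathcal{X}_{e_\alpha}-H\,\mathcal{Y}_{e_\alpha}$ is a mild streamlining of the paper's six-term expansion of the square, but the substance and the identities invoked are identical.
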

\begin{proof} For simplicity, we shall hide the summation sign $\Sigma$ in the proof below. Since
$\mathcal A_{e_\alpha} = {1\over 2}\left(\mathcal X_{e_\alpha} - \mathcal Y_{e_\alpha}  {\mathcal X_e\over \mathcal Y_e}\right)+{\mathcal Y_{e_\alpha} \over \mathcal Y_e}$, using quadratic relations in the previous theorem, we have
\begin{eqnarray}
\mathcal A_{e_\alpha}^2 &=& n + {1\over 4}\mathcal X_{e_\alpha}^2-{1\over 2}\mathcal X_{e_\alpha}\mathcal Y_{e_\alpha}{\mathcal X_e\over \mathcal Y_e}+{n\over 4}\mathcal X_e^2+ \mathcal X_{e_\alpha} {\mathcal Y_{e_\alpha} \over \mathcal Y_e} - n \mathcal X_e\quad \mbox{using $\mathcal Y_{e_\alpha}^2=n\mathcal Y_e$}\cr
&=& n + {n\over 2}  \mathcal X_e^2 + \mathcal X_{e_\alpha}\mathcal Y_{e_\alpha}\bigl(-{\mathcal X_e\over 2\mathcal Y_e} +\frac{1}{\mathcal Y_e} \bigr)- n \mathcal X_e\quad\mbox{using $\mathcal X_{e_\alpha}^2=n\mathcal X_e^2$}\cr
&=& n + (n\mathcal Y_e\mathcal X_e - \mathcal X_{e_\alpha}\mathcal Y_{e_\alpha} )H\quad \mbox{using $H=\frac{\mathcal X_e}{2\mathcal Y_e}-\frac{1}{\mathcal Y_e}$}\cr
&=& n + n(\mathcal Y_e\mathcal X_e -\mathcal L_e^2-\mu^2 )H \quad \mbox{using identity (vi) in Theorem \ref{QRelations}}\cr
&=& n + n\left(\frac{8}{n^3}L^2-\frac{2(n-1)}{n}\mu^2 \right)H\quad \mbox{using identity (vii) in Theorem \ref{QRelations}}\cr
&=& n + \frac{8}{n^2} H \left(L^2-\frac{n^2(n-1)}{4}\mu^2 \right)\cr
\end{eqnarray}
Since $A^2=-1+\sum \mathcal A_{e_\alpha}^2$, we are done.
\end{proof}

\begin{rmk}
If $n=2$, Eq. \eqref{HLA} becomes
\[-2H\left (L^2-\mu^2\right ) = (1-A^2),\]
i.e., the formula given by Eq. (2.8) of Ref. \cite{meng2012}. Please also compare with Eq. (6.10) in Ref. \cite{meng2013b}.
\end{rmk}

\end{document}